\newtheorem{thm}{Theorem}[section]
\newtheorem{lem}[thm]{Lemma}
\newtheorem{alg}[thm]{Algorithm}
\newtheorem{cor}[thm]{Corollary}
\newtheorem{prop}[thm]{Proposition}
\theoremstyle{definition}
\newtheorem{defn}[thm]{Definition}
\newtheorem{prob}[thm]{Problem}
\newcommand{\bi}{\begin{itemize}}
\newcommand{\ei}{\end{itemize}}
\newcommand{\be}{\begin{enumerate}}
\newcommand{\ee}{\end{enumerate}}
\newcommand{\bc}{\begin{center}}
\newcommand{\ec}{\end{center}}
\newcommand{\bt}{\begin{tabular}}
\newcommand{\et}{\end{tabular}}
\newcommand{\ba}{\begin{array}}
\newcommand{\ea}{\end{array}}
\newcommand{\ou}{\overline{u}}
\newcommand{\ov}{\overline{v}}
\newcommand{\N}{\mathbb N}
\newcommand{\e}{\epsilon}
\newcommand{\dcs}{$\mathcal {DCS}$}
\newcommand{\KKM}{KKM}
\newcommand{\MSunic}{MR2966442}
\newcommand{\papertwo}{JenSharif}
\newcommand{\BridGilman}{MR1420509}
\newcommand{\Cho}{cho2006introduction}
\newcommand{\BritHerm}{autostack}
\newcommand{\EEO}{MR3030521}
\newcommand{\LiptonZ}{MR0445901}
\newcommand{\LS}{MR0577064}
\newcommand{\WordProc}{MR1161694}
\newcommand{\Parry}{MR1062874}
\newcommand{\ElderBS}{MR2776987}
\newcommand{\Miller}{MR0310044}
\newcommand{\Mihailova}{MR0222179}
\newcommand{\Mitrana}{MR1605205}
\newcommand{\EKO}{MR2483126}
\newcommand{\EGaut}{MR2328170}
\newcommand{\GreibA}{MR0411257}
\newcommand{\GreibB}{MR513714}
\newcommand{\Kambites}{MR2259632}
\newcommand{\KambitesFormal}{MR2482816}
\newcommand{\Tara}{Tara}
\newcommand{\Dima}{Dima}
\newcommand{\Shapiro}{MR1313124}
\newcommand{\polypush}{MR1704452}
\newcommand{\BookGins}{MR0319412}
\newcommand{\GreibGins}{MR0300828}
\newcommand{\Minsky}{MR0140405}
\newcommand{\LakinThomas}{MR2152152}
\newcommand{\LakinThomasGCC}{MR2598993}
\newcommand{\HopUl}{MR645539}
\newcommand{\Fischer}{MR0235932}
\newcommand{\GeodProbs}{MR2747151}
\newcommand{\MiasFreeSolv}{MR2645045}
\newcommand{\HoltCoindexed}{MR2274726}
\newcommand{\HoltCocf}{MR2132375}
\newcommand{\Joerg}{MR2323454}
\newcommand{\VassThesis}{Vass}
\begin{document}

\title{$\mathcal C$-graph  automatic groups}

\author{Murray Elder}
\address{School of Mathematical and Physical Sciences,
The University of Newcastle,
Callaghan NSW 2308, Australia}
\email{murrayelder@gmail.com}

\author{Jennifer Taback}
\address{Department of Mathematics,
Bowdoin College, Brunswick, ME 04011, USA}
\email{jtaback@bowdoin.edu}

\keywords{automatic group; Cayley graph automatic group; counter language; context-sensitive language; 
word problem; polynomial time algorithm; Baumslag-Solitar group}  
\subjclass[2010]{20F65; 	68Q45}
\date{\today}
\thanks{The first author is supported by Australian Research Council grant  FT110100178, and the second author is partially
supported by National Science Foundation grant DMS-1105407.}

\begin{abstract}
We generalize the notion of a graph automatic group introduced by Kharlampovich, Khoussainov and Miasnikov 
by
replacing the regular languages in their definition with more powerful language classes. For a fixed language class $\mathcal C$,
we call the resulting groups $\mathcal C$-graph automatic.  We prove that the class of $\mathcal C$-graph automatic groups is
closed under change of generating set, direct   and free product for certain classes  $\mathcal C$.
 We show that for quasi-realtime counter-graph automatic
groups  where normal forms have length that is linear in the geodesic length, 
there is an algorithm to compute normal forms (and therefore solve the word problem) in polynomial time. 
The class of quasi-realtime counter-graph automatic groups  includes all
Baumslag-Solitar groups, and  the free group of countably infinite rank. Context-sensitive-graph automatic groups are shown
to be a very large class, which encompasses, for example, groups with unsolvable conjugacy problem, the Grigorchuk group,
and Thompson's groups $F,T$ and $V$. 
\end{abstract}

\maketitle

\section{Introduction}

In this article we consider  extensions of the notion of a graph automatic group, introduced by Kharlampovich,  Khoussainov and Miasnikov in \cite{\KKM}, replacing the regular languages in their definition by more powerful language classes. Primarily we focus on the classes of context-free, counter, indexed and context-sensitive languages. 
We find that replacing regular languages with (quasi-realtime) counter languages preserves many of the desirable properties that  graph automatic groups enjoy, including a polynomial time algorithm to compute normal forms.
We  prove  that a finitely generated group is deterministic context-sensitive-graph automatic (with {\em quasigeodesic normal form} as defined below)
precisely when its word problem is  deterministic context-sensitive. It follows that the class of such groups is very large, and encompasses, for example,   groups with unsolvable conjugacy problem, the Grigorchuk group, and Thompson's group $V$ and all of its subgroups, which include Thompson's groups $F$ and $T$.
We present several  examples of counter-graph automatic groups, including  the   non-solvable Baumslag-Solitar groups, which we show to be  3-counter-graph automatic.
In \cite{JenSharif} the authors and Sharif Younes prove that Thompson's group $F$ is counter-graph automatic.

Several authors have considered generalized versions of automatic groups using different automata in place of finite state machines: Bridson and Gilman  introduced a geometric version of {\em asynchronously} automatic groups using indexed languages \cite{\BridGilman}; Baumslag, Shapiro and Short defined a class based on parallel computations by pushdown automata \cite{\polypush};
 and  Cho considered a version with counter languages in his PhD thesis \cite{\Cho}. Recent work of Brittenham and Hermiller \cite{\BritHerm} introduces the class of {\em autostackable groups} which also generalize the notion of automaticity.

The article is organized as follows. In Section \ref{sec:defns} we define the key notions of counter languages and $\mathcal C$-graph automatic groups  used in the paper. In  Section \ref{sec:counterpoly} we give a polynomial time  algorithm which computes normal forms in counter-graph automatic groups, and in Section \ref{sec:cs} we examine the consequences of permitting context-sensitive languages in the definition of $\mathcal C$-graph automatic groups.  In
Section \ref{sec:closure} we consider
 closure properties of $\mathcal C$-graph automatic groups, and in Section \ref{sec:egs}
we give examples of groups with counter-graph automatic structures.

Many of the ideas in this paper come  from the paper by  Olga Kharlampovich,   Bakhadyr Khoussainov and  Alexei Miasnikov \cite{\KKM}, and we are grateful for their help with this project.
We also thank Bob Gilman, Pascal Weil and especially Sharif Younes  for helpful conversations about this paper.
Lastly we thank the anonymous referee for  helpful feedback and suggestions.

\section{Background and Definitions}\label{sec:defns}

\subsection{Languages and automata}
For standard definitions of finite state, pushdown, nested stack, and linear bounded automata (accepting regular, context-free, indexed and context-sensitive languages respectively) see, for example, \cite{\HopUl}.
We begin by defining the particular types of counter automata we will use.

\subsubsection{Counter automata}
There are many variants of counter automata and languages in the literature,
see for example \cite{\BookGins, \EGaut, \ElderBS, \EKO, \Fischer, \GreibA, \GreibB, \Kambites, \Mitrana}.
In this article we define a counter automaton as follows.

\begin{defn}[counter automaton]  A counter automaton can be defined with a variety of attribues:

\medskip

\begin{enumerate}
\item A {\em blind deterministic $k$-counter automaton} is
  a  deterministic finite
state automaton augmented with a finite number of integer counters: these are all
initialized to zero, and can be incremented and decremented during operation,
but not read; the automaton accepts a word exactly if it  reaches
an accepting state with the counters all returned to zero. \footnote{These are called {\em $\mathbb Z^k$-automata} in \cite{\EKO,\Mitrana,\KambitesFormal}.}

\medskip

\item A {\em non-blind determistic $k$-counter automaton} is
  a deterministic finite
state automaton augmented with a finite number of integer counters: these are all
initialized to zero, and can be incremented, decremented, compared to zero and set to zero during operation;  the automaton accepts a word exactly if it  reaches
an accepting state with the counters all returned to zero.

\medskip

\item A (blind or non-blind) $k$-counter automaton is {\em non-deterministic} if from each state there can be multiple transitions  labeled by the same input letter, and transitions that read no input letter, labeled by $\e$. \footnote{These are called {\em multi-stack-counter automata} in  \cite{\BookGins}.} Following Book and Ginsburg \cite{\BookGins}  we require these automata to run in {\em quasi-realtime}, meaning there is a  bound on the number of consecutive  $\e$ transitions allowed.
\end{enumerate}

Define $\mathscr S_k$ to be the class of languages accepted by a non-blind  non-deterministic $k$-counter automata running in quasi-realtime, and $\mathscr C_k$ to be the class of languages accepted by a blind non-deterministic $k$-counter automata running in quasi-realtime.
 \end{defn}

 It is well known (\cite{\Minsky}, see also \cite{\HopUl} Theorem 7.9) that  a non-blind  non-deterministic $k$-counter automata  with $k\geq 2$  and no time restriction
can simulate a Turing machine, and so the class of languages accepted by such automata coincides with the class of recursively enumerable languages.
  Book and Ginsburg \cite{\BookGins}
 prove that imposing the quasi-realtime requirement, the languages $\mathscr C_k$ and $\mathscr S_k$ form a strict hierarchy:
\begin{thm}[Book and Ginsburg \cite{\BookGins}]
\label{thm:bookgins}
The language classes $\mathscr C_i$ and $\mathscr S_i$ satisfy the following inclusions.
\begin{eqnarray}
 \mathscr C_1\subsetneq \mathscr S_1\subsetneq \mathscr C_2 \subsetneq \mathscr S_2\subsetneq \mathscr C_3 \subsetneq \cdots 
\end{eqnarray}\end{thm}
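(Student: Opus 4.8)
The chain encodes three separate claims: the inclusions $\mathscr C_i\subseteq\mathscr S_i$, the inclusions $\mathscr S_i\subseteq\mathscr C_{i+1}$, and the fact that each inclusion is strict. The first family is immediate: a blind non-deterministic $i$-counter automaton running in quasi-realtime is, verbatim, a non-blind one that never invokes its ``compare to zero'' or ``set to zero'' instructions, so the same machine witnesses membership in $\mathscr S_i$. It is worth noting that the quasi-realtime hypothesis is essential everywhere below: without a bound on consecutive $\e$-transitions, two non-blind counters already suffice to simulate a Turing machine (\cite{\Minsky}), so the putative hierarchy would collapse at the top, and the inclusions $\mathscr S_i\subseteq\mathscr C_{i+1}$ would fail outright.

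For the simulation $\mathscr S_i\subseteq\mathscr C_{i+1}$, I would take a non-blind quasi-realtime $i$-counter automaton $M$ for a language $L$ and build a blind quasi-realtime $(i+1)$-counter automaton $M'$ for $L$. The machine $M'$ keeps $i$ counters that shadow the counters of $M$ under increments and decrements, and uses its extra counter purely as a verification register. The only instructions $M'$ cannot mimic directly are the zero-tests and resets, and the key structural fact to exploit is that a blind counter can certify only a \emph{net}-zero condition, and only at acceptance time. Accordingly, each zero-test of $M$ is replaced by a non-deterministic guess of its outcome; the guessed sequence of outcomes makes the rest of $M$'s behaviour effectively deterministic, and the correctness of each guess is reduced to a net-zero condition that is settled by the final all-counters-zero acceptance requirement, with the auxiliary counter absorbing the bookkeeping (and handling resets). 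One must also verify that the bounded number of $M'$-moves per simulated $M$-step keeps $M'$ quasi-realtime, which is routine. Arranging the bookkeeping so that \emph{every} incorrect guess is necessarily exposed by the final zero-test on some counter is the one delicate point of this direction.

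Strictness is the genuine obstacle. For each $i$ one must exhibit $L_i\in\mathscr S_i\setminus\mathscr C_i$ and $L'_i\in\mathscr C_{i+1}\setminus\mathscr S_i$. For the former, the right phenomenon is a prefix constraint that forces the use of zero-tests: for $i=1$ the one-pair Dyck language lies in $\mathscr S_1$ (increment on ``$[$'', test-then-decrement on ``$]$'', accept with counter zero), and it is not in $\mathscr C_1$ because a blind $1$-counter automaton accepts a word exactly when its finite control admits an accepting path along which the counter updates sum to zero, and a pumping argument inserts a loop that preserves this net sum while violating a prefix inequality (e.g.\ it fails to separate ``$[\,]$'' from ``$]\,[$''); a suitable concatenation with $i-1$ independent blind counts promotes this to a separator of $\mathscr C_i$ from $\mathscr S_i$. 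For the latter, one wants $i+1$ counts that genuinely overlap and cannot be sequentialised, such as $\{a_1^{n_1}a_2^{n_2}\cdots a_{i+1}^{n_{i+1}}b_1^{n_1}b_2^{n_2}\cdots b_{i+1}^{n_{i+1}} : n_1,\dots,n_{i+1}\ge 0\}$, which is clearly in $\mathscr C_{i+1}$; showing it is not in $\mathscr S_i$ requires a crossing-sequence / information-theoretic pumping argument to the effect that a quasi-realtime machine with only $i$ counters cannot retain $i+1$ independent quantities through the $b$-phase, and this argument must invoke the linear time bound (since without it two counters would already suffice). Making these two separations rigorous, uniformly in $i$, is the technical heart of Book and Ginsburg's work \cite{\BookGins}.
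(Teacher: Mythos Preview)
The paper does not prove this theorem at all: Theorem~\ref{thm:bookgins} is stated purely as a citation of Book and Ginsburg~\cite{\BookGins}, with no accompanying argument. There is therefore nothing in the paper to compare your sketch against.

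As to the sketch itself, the trivial inclusion $\mathscr C_i\subseteq\mathscr S_i$ is fine, and you correctly flag that the quasi-realtime hypothesis is what prevents the hierarchy from collapsing. Your outline of $\mathscr S_i\subseteq\mathscr C_{i+1}$, however, is where a reader should be cautious: the claim that a \emph{single} extra blind counter suffices to certify an unbounded sequence of guessed zero-tests and resets across $i$ counters is not obviously realizable by the mechanism you describe (each guessed ``the $j$th counter is zero now'' is a separate equality, and a blind machine checks only one net-zero condition per counter at the very end). You acknowledge this is the delicate point, but the sketch does not indicate how the bookkeeping actually closes; this is precisely the technical content of~\cite{\BookGins}, not something one can wave through. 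Likewise, your separating languages are plausible candidates, and your instinct that strictness against $\mathscr S_i$ must exploit the linear time bound is sound, but the lower-bound arguments are left entirely to the cited reference. In short: as a roadmap pointing at~\cite{\BookGins} this is reasonable, but it is not a proof, and since the paper itself offers none, the honest move is simply to cite the source as the authors do.
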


 In this article all counter automata are assumed to run in quasi-realtime.

\begin{lem}\label{lem:partialnondet}
If $L\in \mathscr S_k$ then   there is a constant $F$ so that on reading a word of length $n$ the absolute value of any counter is at most $Fn$.
\end{lem}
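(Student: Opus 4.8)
The plan is to exploit the quasi-realtime hypothesis directly. By definition of a non-blind non-deterministic $k$-counter automaton running in quasi-realtime, there is a constant $p$ bounding the number of consecutive $\e$-transitions the automaton may take. Each single transition changes any given counter by at most $1$ (it either increments, decrements, sets to zero, or leaves the counter unchanged — and I would remark that a ``set to zero'' transition, while not a $\pm1$ step, still cannot push the absolute value of a counter \emph{above} its previous value, so it only helps the bound). Thus the issue is purely to bound the total number of transitions, including $\e$-transitions, executed while reading a word of length $n$.

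First I would observe that in reading a word of length $n$, the machine performs exactly $n$ non-$\e$ transitions, and between any two consecutive non-$\e$ transitions (as well as before the first and after the last) the machine performs at most $p$ consecutive $\e$-transitions. Hence the total number of transitions in any accepting (indeed any) run on a word of length $n$ is at most $n + p(n+1) = (p+1)n + p$. Since each transition changes each counter by at most $1$ in absolute value, and each counter starts at $0$, after $t$ transitions the absolute value of any counter is at most $t \le (p+1)n + p$. For $n \ge 1$ this is at most $(2p+1)n$, so taking $F = 2p+1$ gives the claimed bound for all $n\ge 1$ (and the $n=0$ case is trivial since the empty word is read with at most $p$ $\e$-transitions, giving absolute value at most $p$; one can absorb this by enlarging $F$ or simply noting the statement is about reading words of length $n$ and is vacuous/trivial at $n=0$).

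The only subtlety — and the one point I would be careful to spell out — is the treatment of the ``set to zero'' operation, since unlike increment/decrement it is not a $\pm1$ change and a naive ``each step moves a counter by at most one'' claim is false for it. The fix is the monotonicity remark above: resetting a counter to zero cannot increase its absolute value, so when tracking the running maximum of $|c_i|$ over the course of a run, only the increment and decrement operations can ever push the maximum up, and each of those contributes at most $1$. Formally one argues by induction on the number of transitions that $|c_i| \le (\text{number of increment/decrement steps applied to } c_i \text{ so far}) \le (\text{total number of transitions so far})$. This is the ``main obstacle,'' though it is a minor one; everything else is the bookkeeping of the quasi-realtime bound. Note also that the lemma is stated for $\mathscr S_k$, the non-blind class, which is the more general of the two, so the same argument a fortiori covers $\mathscr C_k$.
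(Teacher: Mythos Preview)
Your approach is essentially the same as the paper's: bound the total number of transitions using the quasi-realtime constant, then multiply by the maximum per-transition effect on a counter. There is, however, one inaccuracy worth flagging. You assert that ``each single transition changes any given counter by at most $1$,'' but in this paper's model a transition may be labeled $+m$ or $-m$ for an arbitrary fixed $m\in\mathbb N$ (see the notation list immediately after the definition of counter automata, and the transitions $1_{+m}$, $1_{-n}$ in the Baumslag--Solitar automaton). The paper's own proof therefore introduces a constant $m$ equal to the maximum amount any counter is changed by any single transition, and obtains the bound $Dm(n+1)+nm<3Dmn$ rather than your $(p+1)n+p$. Your argument is repaired verbatim by replacing ``$1$'' with this constant $m$; the structure and the handling of the set-to-zero operation are otherwise fine (indeed your careful treatment of $\downarrow$ via the monotonicity remark is a nice touch that the paper leaves implicit).
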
\begin{proof}
Let $M$ be the non-deterministic $k$-counter automaton accepting $L$, and
suppose the maximum amount any counter is changed by any transition is $m$.
On  input $u=u_1\dots u_n$
consider all paths in $M$   labeled
$e_0 u_1e_1\dots e_{n-1}u_ne_n$
where $e_i$ is a string of $\e$ transitions, which by assumption has length at most some bound $D$.
Then each subpath $e_i$ can change the value of a counter by at most $Dm$, and so the entire path can change a counter by at most $Dm(n+1)+nm<3Dmn$, so set $F=3Dm$.
\end{proof}

\begin{cor}
The classes  $\mathscr C_k, \mathscr S_k$ are strictly contained in the class of non-deterministic context-sensitive languages.
\end{cor}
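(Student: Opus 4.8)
The plan is to show each of the two containments — membership in nondeterministic context-sensitive (NSPACE$(n)$) and strictness — separately, leveraging Lemma~\ref{lem:partialnondet} for the first. First I would handle $\mathscr S_k$, which subsumes $\mathscr C_k$ since a blind automaton is a special case of a non-blind one. Given $L \in \mathscr S_k$ accepted by a non-blind nondeterministic quasi-realtime $k$-counter automaton $M$, I would build a nondeterministic linear bounded automaton $M'$ that simulates $M$ on input $u = u_1 \cdots u_n$. The key point is the workspace bound: by Lemma~\ref{lem:partialnondet}, every counter stays within absolute value $Fn$ throughout the computation, so each of the $k$ counters can be stored in binary using $O(\log(Fn)) = O(\log n)$ tape cells, and the finite-state control of $M$ needs only constantly many cells. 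Thus $M'$ uses $O(k \log n) \subseteq O(n)$ space, so $M'$ is an LBA and $L$ is context-sensitive. The quasi-realtime condition is what makes this work: without the bound on consecutive $\e$-transitions, the counters (and hence the required workspace) could grow unboundedly even on a fixed-length input, as happens when two unrestricted counters simulate a Turing machine. I should also note that $M'$ needs to detect when $M$ accepts (all counters zero, accepting state) and when a computation branch has run too long — but since a quasi-realtime machine reading a length-$n$ word halts within $O(n)$ steps on any branch, $M'$ can maintain a step counter of $O(\log n)$ bits and reject overlong branches, keeping everything within linear space.

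For strictness, I would exhibit a context-sensitive language that lies in no $\mathscr C_k$ or $\mathscr S_k$. The cleanest route is a counting/diagonalization argument: by Theorem~\ref{thm:bookgins} the classes $\mathscr C_k$ and $\mathscr S_k$ form a strict infinite hierarchy, and one can take a language that is provably not in $\mathscr S_k$ for any fixed $k$. A standard candidate is $L = \{ a_1^{n} a_2^{n} \cdots a_m^{n} : n \geq 0 \}$ for $m$ large — but cleaner still is to use the union-style language $\bigcup_k L_k$ that witnesses the strict hierarchy, or simply to invoke the fact that the whole union $\bigcup_k \mathscr S_k$ is a proper subclass of the context-sensitive languages. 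Indeed, there is a deterministic context-sensitive (even $\mathrm{DSPACE}(n)$) language $L_0$ that is not accepted by any quasi-realtime multicounter machine: for instance, $L_0 = \{ ww : w \in \{0,1\}^* \}$ is context-sensitive, but a straightforward pumping-style argument on counter configurations — the configuration space of a quasi-realtime $k$-counter machine on a length-$2N$ input has only polynomially many reachable configurations by Lemma~\ref{lem:partialnondet}, far fewer than the $2^N$ distinct prefixes $w$ that must be distinguished — shows $L_0 \notin \mathscr S_k$ for any $k$. This gives the proper containment.

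The main obstacle I anticipate is the strictness argument, not the space simulation: proving that a specific language fails to be quasi-realtime multicounter requires a careful information-theoretic or pumping argument about counter configurations, and one must be sure the chosen language is genuinely context-sensitive while ruling out all $k$ simultaneously. The containment direction, by contrast, is essentially bookkeeping once Lemma~\ref{lem:partialnondet} is in hand — the only subtlety there is making sure the $\e$-transition bound is used to bound the total number of computation steps (and hence both the counter magnitudes and the simulation time), so that the LBA's auxiliary step counter also fits in linear space.
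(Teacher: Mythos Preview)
Your containment argument is correct and matches the paper's: simulate the counter automaton by a Turing machine, storing the counters on the tape, and invoke Lemma~\ref{lem:partialnondet} to bound the tape needed by a linear function of the input length. (The paper does not bother with binary encoding or a step counter --- storing $k$ counters of magnitude at most $Fn$ already fits in $kFn$ cells --- but these extra details do no harm.)

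For strictness you are working much harder than necessary and, in the process, missing the one-line argument the paper actually uses. The corollary is a statement about each \emph{fixed} $k$, so you do not need a single language lying outside $\bigcup_k \mathscr S_k$. Once the containment $\mathscr C_{k+1}\subseteq\mathrm{CS}$ is established, Theorem~\ref{thm:bookgins} immediately gives
\[
\mathscr S_k \subsetneq \mathscr C_{k+1} \subseteq \mathrm{CS},
\]
and that is the whole proof. You even cite Theorem~\ref{thm:bookgins} but then veer off to look for a witness outside every level, which is a strictly stronger (and unneeded) statement.

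Your proposed witness argument for $\{ww\}$ is also sloppily stated. As written --- ``polynomially many reachable configurations, far fewer than the $2^N$ prefixes that must be distinguished'' --- it reads as a Myhill--Nerode argument, which fails for nondeterministic machines: the relevant invariant after reading a prefix is the \emph{set} of reachable configurations, and there are $2^{\mathrm{poly}(N)}\gg 2^N$ such sets. The argument can be repaired as a cut-and-paste on specific accepting runs (fix one accepting run of $M$ on each $ww$, pigeonhole on the midpoint configuration, then splice runs to accept $ww'$ with $w\neq w'$), but this is real extra work that the corollary simply does not require.
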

\begin{proof}
A $k$-counter automaton can be simulated by a Turing machine, with each counter value stored on the tape. On input of length $n$, the amount of tape required to store the values of all counters is 
$kFn$ by Lemma~\ref{lem:partialnondet}.
The containment is strict by Theorem~\ref{thm:bookgins}.
\end{proof}


In drawing $k$-counter automata (see examples in Section \ref{sec:egs}) we label transitions by the input letter to be read, with subscript a $k$-tuple from the following alphabet:
\begin{itemize}
\item $+,-$ to increase/decrease a counter by 1
\item $+m,-m$ to increase/decrease a counter by $m\in\mathbb N$
\item $=,\neq$ to compare a counter to zero
\item $\downarrow$ to set a counter to zero.
\end{itemize} For example, in a non-blind $4$-counter automaton
 the label $1_{+,\neq\downarrow,,-3}$ means if the second counter is not 0, read input letter $1$, add 1 to the first counter,
  set the second counter to 0, make no change to the third counter, and subtract 3 from the last counter;  if the second counter was 0 then the transition is not followed.

\subsection{Closure properties of formal language classes}

We briefly outline some closure properties of the formal language classes we consider below.

\begin{defn}[homomorphism of languages]   
Let $\Lambda, \Sigma$ be finite alphabets. For each $\lambda\in \Lambda$ let $r_{\lambda}\in \Sigma^*$ be a finite word, and let $L\subseteq \Lambda^*$.
Then $\phi:L\rightarrow \Sigma^*$ defined by $\phi(\lambda_1\dots \lambda_k)=r_{\lambda_1}\dots r_{\lambda_k}$ for $\lambda_i \in L$ is a {\em homomorphism of formal languages}.
If $r_{\lambda_i}$ is not the empty word for any $\lambda_i$ then $\phi$ is an {\em $\e$-free homomorphism}.
\end{defn}

A class $\mathcal C$ of formal languages is closed under ($\e$-free) homomorphism if $L\in\mathcal C$ is a language in the finite alphabet $\Lambda$ and $\phi:\Lambda^*\rightarrow \Sigma^*$ is any homomorphism, then $\phi(L) \in \mathcal C$.
The class $\mathcal C$ is closed under inverse homomorphism if for any $L \subset \Sigma^*$, where $\Sigma$ is any finite alphabet, and any homomorphism $\phi:\Lambda^*\rightarrow \Sigma^*$, if $L \in {\mathcal C}$ then $\phi^{-1}(L) \in {\mathcal C}$.

Closure of a formal language class ${\mathcal C}$ under finite intersection varies widely with ${\mathcal C}$.  The class of regular languages, for example, is closed under finite intersection, but the class of context-free languages is not, although the intersection of a context-free language and a regular language is again context-free.  
In her thesis, Brough introduces the following class of languages.

\begin{defn}[poly-context-free;  \cite{\Tara}]
A language $L\subseteq \Sigma^*$ is {\em $k$-context-free} if it is the intersection of at most $k$ context-free languages, and {\em poly-context-free} if it is the intersection of some finite number of context-free languages.
\end{defn}

By design, the class of poly-context-free languages is closed under taking finite intersection, and intersection with regular languages.


The following lemma describes the closure of the class of counter languages under intersection.

\begin{lem} \label{lem:counterintersect}
The intersection of a $k$-counter  language with a regular language is $k$-counter, and the intersection of $k$- and $l$-counter languages is a $(k+l)$-counter language.  \end{lem}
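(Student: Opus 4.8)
The plan is to build the witnessing automata directly as products of the given counter automata (and finite-state automaton), keeping the counters disjoint so their numbers simply add. First I would treat the regular case. Suppose $L_1 \in \mathscr C_k$ (or $\mathscr S_k$) is accepted by a quasi-realtime blind (resp.\ non-blind) non-deterministic $k$-counter automaton $M_1$, and $L_2$ is regular, accepted by a finite-state automaton $M_2$. I would form the standard product automaton whose state set is the product of the two state sets, whose counters are exactly the $k$ counters of $M_1$, and whose transitions synchronize: on reading an input letter $a$, $M_1$ performs one of its $a$-transitions (updating its counters) while $M_2$ simultaneously performs its $a$-transition; on an $\e$-transition of $M_1$, $M_2$ stays put. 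Acceptance requires both components to be in accepting states and (since $M_1$ is the only machine with counters) all counters to be zero. One checks this accepts exactly $L_1 \cap L_2$, is again blind iff $M_1$ was blind, and runs in quasi-realtime because the $\e$-transitions are precisely those of $M_1$, so the same consecutive-$\e$ bound applies. Hence the intersection lies in $\mathscr C_k$ (resp.\ $\mathscr S_k$).

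For the second statement, let $L_1 \in \mathscr C_k$ and $L_2 \in \mathscr C_l$ (the non-blind case is identical), accepted by $M_1$ with $k$ counters and $M_2$ with $l$ counters. I would again take the product of the two state sets, but now equip the product machine with $k+l$ counters: the first $k$ are operated by the $M_1$-component exactly as $M_1$ would, the last $l$ by the $M_2$-component exactly as $M_2$ would, and neither component ever touches the other's counters. The subtlety is the $\e$-transitions: since both machines may have them and they need not line up, I would let the product machine interleave them, i.e.\ allow a transition in which $M_1$ takes an $\e$-step while $M_2$ idles (and vice versa), as well as synchronized input-letter transitions in which both machines read the same letter $a$. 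A word $w$ is accepted iff there is a choice of $\e$-interleaving and of non-deterministic moves so that both components reach accepting states with all $k+l$ counters zero; projecting to each component shows this happens iff $w$ is accepted by both $M_1$ and $M_2$, so the language is $L_1 \cap L_2$. Blindness is preserved because no counter is ever compared to zero. For quasi-realtime: if $M_1$ and $M_2$ have consecutive-$\e$ bounds $D_1, D_2$, then a run of the product machine can have at most $D_1 + D_2$ consecutive $\e$-moves before some input letter must be read (at most $D_1$ coming from $M_1$-steps and at most $D_2$ from $M_2$-steps), so $D_1 + D_2$ serves as the required bound.

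The main obstacle, and the only place requiring care, is exactly this treatment of $\e$-transitions in the counter-counter case: one must not force the two machines to synchronize their $\e$-moves (they generally cannot), and one must verify that the interleaving does not smuggle in any word not accepted by both machines — which follows because along any accepting run of the product, deleting the steps in which $M_2$ idles yields a legitimate accepting run of $M_1$ on $w$ (its counters and state evolve exactly as in the product), and symmetrically for $M_2$. Everything else is the routine product construction; the counter bounds and the quasi-realtime bound are additive precisely because the two sets of counters and the two streams of $\e$-moves are kept disjoint.
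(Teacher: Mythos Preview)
Your proof is correct and follows the same product-construction approach as the paper, which also builds a $(k+l)$-counter automaton on the product state set with the two blocks of counters kept disjoint, and recovers the regular case as $l=0$. You are in fact more careful than the paper's brief sketch about interleaving the $\e$-transitions of the two machines and verifying the quasi-realtime bound $D_1+D_2$; the paper leaves these details implicit.
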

\begin{proof}
Let $M$ and $N$ be counter automata with $k$ and $l$ counters  respectively. Define a $(k+l)$-counter automaton  with states $S\times T$ where $S$ are the  states  of $M$ and $T$ are the states of $N$, as follows.
Put a transition from $(s,t)$ to $(s',t')$ labeled by $\lambda_{\mathbf x}$ if
\begin{itemize}
\item  there is a transition from $s$ to $s'$ in $M$ labeled $\lambda_{(x_1,\dots, x_k)}$,
 \item  there is a transition from $t$ to $t'$ in $N$ labeled $\lambda_{(y_1,\dots, y_l)}$, and
 \item $\mathbf x=(x_1,\dots, x_k, y_1,\dots, y_l)$\end{itemize}
  where $x_i,y_j$ are counter instructions.

If $l=0$ then $N$ is simply a finite state automaton and we recover the first statement.
Note that the resulting automaton is blind and/or deterministic if and only if both $M$ and $N$ are.
\end{proof}

A linear bounded automaton  is a  Turing machine with memory linearly bounded by the size of the input, that is, there is a constant $E$ so that  on input a word of length $n$, the number of squares on the tape that can be used is $En$.  See, for example, \cite{\HopUl}.
In this article a  language is {\em (deterministic) context-sensitive} if it is the set of strings  accepted by a (deterministic) linear bounded automaton. With this definition a context-sensitive language can contain the empty string.   See \cite{\HopUl} (pp. 225--226) and \cite{\Shapiro} for a discussion of this.

\begin{lem}\label{lem:langsclosed}
The classes of   regular, counter, and  poly-context-free languages are closed under
 homomorphism, inverse homomorphism, intersection with regular languages, and finite intersection.

 The class of context-sensitive languages is closed under
$\e$-free homomorphism, inverse homomorphism, intersection with regular languages, and finite intersection.
\end{lem}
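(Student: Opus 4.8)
The plan is to handle the regular case from textbook automaton constructions, to reduce the poly-context-free case to the corresponding closure properties of the context-free languages together with the (trivial) distributivity of the relevant operations over finite intersection, to handle the counter case using Lemma~\ref{lem:counterintersect} together with explicit counter-automaton constructions, and to handle the context-sensitive case using linear bounded automata; the one genuinely delicate item in each of the non-regular cases is closure under an \emph{erasing} homomorphism.

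For the regular languages every clause is standard: intersection with a regular language and finite intersection come from the product automaton, inverse homomorphism from running the word $r_\lambda$ through a deterministic automaton upon reading $\lambda$, and homomorphism from replacing each transition labelled $\lambda$ by a path spelling $r_\lambda$ (a single $\e$-transition if $r_\lambda$ is empty) followed by the usual removal of $\e$-transitions. For the poly-context-free languages, note that $\phi^{-1}\!\big(\bigcap_i L_i\big)=\bigcap_i \phi^{-1}(L_i)$ and $R\cap\bigcap_i L_i=(R\cap L_1)\cap\bigcap_{i\ge 2}L_i$, and that a finite intersection of finitely many $k_i$-context-free languages is $\big(\sum_i k_i\big)$-context-free; since the context-free languages are closed under inverse homomorphism and under intersection with a regular language \cite{\HopUl}, all clauses except homomorphism follow at once. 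For the counter languages, closure under intersection with a regular language and under finite intersection is precisely Lemma~\ref{lem:counterintersect} (taking the union over the number of counters so that the class is genuinely closed), and closure under inverse homomorphism is a direct construction: given a quasi-realtime $k$-counter automaton $M$ over $\Sigma$ accepting $L$, build a quasi-realtime $k$-counter automaton over $\Lambda$ which on reading $\lambda$ simulates the computation of $M$ on the fixed word $r_\lambda$ through a block of $\e$-transitions carrying $M$'s counters unchanged — the block has bounded length because $|r_\lambda|$ is bounded and $M$ runs in quasi-realtime — and this preserves blindness.

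For the context-sensitive languages I would use the description as the languages accepted by nondeterministic linear bounded automata. Given a linear bounded automaton for $L\subseteq\Lambda^*$ and an $\e$-free homomorphism $\phi$, on input $v$ of length $n$ a new machine guesses a word $w$ with $\phi(w)=v$; since $\phi$ is $\e$-free one has $|w|\le n$, so $w$ fits in linear space, and the machine verifies $\phi(w)=v$ and then runs the given automaton on $w$, using space linear in $n$ throughout. Inverse homomorphism is the same in reverse: on input $w$ of length $n$ compute $v=\phi(w)$, whose length is at most $\big(\max_\lambda|r_\lambda|\big)n$, and run the given automaton on $v$. Intersection with a regular language and finite intersection follow by running the machines one after another on the same input, reusing the workspace. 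The $\e$-free hypothesis is essential, not cosmetic: an erasing homomorphic image of a context-sensitive language can be an arbitrary recursively enumerable language, which is why the statement is restricted as it is.

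The main obstacle, in all the non-regular cases, is closure under an arbitrary (hence possibly erasing) homomorphism. For poly-context-free languages the point is that a homomorphism need not commute with finite intersection — in general $\phi\big(\bigcap_i L_i\big)\subsetneq\bigcap_i\phi(L_i)$ — so one cannot argue by intersecting the context-free images $\phi(L_i)$; when $\phi$ is injective the two sides agree and the claim is immediate (and the homomorphisms that arise in the application to change of generating set can be taken injective), while the general statement requires the dedicated argument of \cite{\Tara}. For the counter languages the obstruction is that an erasing homomorphism can collapse an unbounded block of input to the empty word, which would force an unbounded run of $\e$-transitions and so violate the quasi-realtime condition; that the quasi-realtime (blind) multicounter classes are nonetheless closed under arbitrary homomorphism is part of the formal-language analysis of Book and Ginsburg \cite{\BookGins}, which I would cite rather than reprove.
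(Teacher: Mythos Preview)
Your proposal is correct, and in fact considerably more detailed than the paper's own proof, which consists entirely of citations: Chapter~11 of \cite{\HopUl} for the regular and context-sensitive cases, \cite{\Tara} for the poly-context-free case, and \cite{\GreibGins} for the counter case. Your sketches of the actual arguments --- the product constructions, the distributivity of inverse image over intersection, the explicit simulation for inverse homomorphism on counter automata with the quasi-realtime bound preserved, and the linear-bounded-automaton arguments for the context-sensitive clauses --- are all sound, and you correctly isolate the genuinely nontrivial point in each non-regular class, namely closure under an erasing homomorphism, deferring that to the literature just as the paper does. One small discrepancy: for the counter-language homomorphism closure you cite \cite{\BookGins}, whereas the paper cites \cite{\GreibGins}; both are relevant, but you may wish to align with the paper's choice. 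The phrase ``carrying $M$'s counters unchanged'' in your inverse-homomorphism construction reads oddly --- you presumably mean that $M$'s counter instructions are carried out during the simulated block, not that the counters are held fixed.
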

\begin{proof}
See Chapter 11 of \cite{\HopUl} for the cases of regular and context-sensitive languages, \cite{\Tara} for  poly-context-free languages, and \cite{\GreibGins} for counter languages.
\end{proof}

\subsection{${\mathcal C}$-graph automatic groups}
Let $G$ be a group with symmetric  generating set $X$, and $\Lambda$ a finite set of symbols. In general we do not assume that $X$ is finite.
The number of symbols (letters) in a word $u\in\Lambda^*$ is denoted $|u|_{\Lambda}$.
\begin{defn}[quasigeodesic normal form]
A  \textit{normal form for $(G,X,\Lambda)$} is a set of words $L\subseteq \Lambda^*$ in bijection with $G$. A normal form $L$ is
  \textit{quasigeodesic}  if there is a constant $D$ so  that  for each $u\in L$, $|u|_{\Lambda}\leq D(||u||_X+1)$ where $||u||_X$ is the length of a geodesic  in $X^*$ for the group element represented by  $u$.
\end{defn}
The $||u||_X+1$ in the definition allows for normal forms where the identity of the group is represented by a nonempty string of length at most $D$.  We denote the image of $u\in L$ under the bijection with $G$ by $\ou$.

Next we define the {\em convolution} of strings, which will be needed throughout the paper.

\begin{defn}[convolution;  Definition 2.3 of \cite{\KKM}]
Let $\Lambda$ be a finite set of symbols, $\diamond$  a symbol not in $\Lambda$,  and let $L_1,\dots, L_k$ be a finite set of languages over $\Lambda$. Put  $\Lambda_{\diamond}=\Lambda\cup\{\diamond\}$. Define the {\em convolution of a tuple} $(w_1,\dots, w_k)\in L_1\times \dots \times L_k$ to be the string $\otimes(w_1,\dots, w_k)$ of length $\max |w_i|_{\Lambda}$ over the alphabet $\left(\Lambda_{\diamond}\right)^k$  as follows.
The $i$th symbol of the string is
\[\left(\begin{array}{c}
\lambda_1\\
\vdots\\
\lambda_k
\end{array}\right)\]
where $\lambda_j$ is the $i$th letter of $w_j$ if $i\leq |w_j|_{\Lambda}$ and $\diamond$ otherwise.
Then  \[\otimes(L_1,\dots, L_k)=\left\{\otimes(w_1,\dots, w_k) \mid w_i\in L_i\right\}.\]
\end{defn}
As an example, if $w_1=aa, w_2=bbb$ and $w_3=a$ then
\[\otimes(w_1,w_2,w_3)=\left(\begin{array}{c}
a\\
b\\
a
\end{array}\right)
\left(\begin{array}{c}
a\\
b\\
\diamond
\end{array}\right)
\left(\begin{array}{c}
\diamond\\
b\\
\diamond
\end{array}\right)\]

When $L_i=\Lambda^*$ for all $i$ the definition in \cite{\KKM} is recovered.

We begin with the definition of an automatic group, as introduced in \cite{\WordProc}. 

\begin{defn}[automatic group;   \cite{\WordProc}]
Let $(G,X)$ be a group and symmetric finite generating set.
We say that $(G,X)$ is \textit{automatic}
if there is a regular normal form $L\subseteq X^*$ such that for each $x\in X$ the language
$$L_x=\{\otimes(u,v) \mid u,v\in L,  \ov=_G \ou x\}$$ is regular.
\end{defn}
We remark that the usual definition of an automatic group requires a regular language $L$ to be in surjection with $G$, rather that in bijection. Theorem 2.5.1 of \cite{\WordProc} tells us that if a group has an automatic structure then there is an alternate automatic structure with a unique normal form word for each group element. Hence there is no loss of generality in requiring a normal form to be in bijection with the group.

Kharlampovich,  Khoussainov and Miasnikov extended this definition in \cite{\KKM}
by allowing the language of normal forms to be defined over a finite alphabet other than a generating set for the group.

\begin{defn}[graph automatic group;  \cite{\KKM}]
Let $(G,X)$ be a group and symmetric generating set, and $\Lambda$ a finite set of symbols.
We say that $(G,X,\Lambda)$ is \textit{graph automatic}
if there is a regular normal form $L\subseteq \Lambda^*$ such that for each $x\in X$ the language $$L_x=\{\otimes(u,v) \mid u,v\in L,  \ov =_G \ou x\}$$ is regular.
\end{defn}
Note that unlike \cite{\KKM} we do not insist that the generating set $X$ be finite; again our definition of a normal form requires a bijection between the group elements and the language of normal forms.

A useful first example to consider is the Heisenberg group (Example 6.6 of \cite{\KKM}), which is not automatic as it has a cubic Dehn function, but is graph automatic.
To prove the latter statement, matrices are represented as the convolution of three binary integers.

The class of graph automatic groups includes the following groups which are known not to be automatic: the solvable Baumslag-Solitar groups, class 2 nilpotent groups,  
 and non-finitely presented groups \cite{\KKM}. It also includes groups with unsolvable conjugacy problem \cite{\MSunic}.
It is not known if groups of intermediate growth belong to this class. Miasnikov and Savchuk \cite{\Dima} have shown that certain {\em graphs} of intermediate growth are graph automatic; see \cite{\KKM} for the definition of automatic structures on objects other than groups.

In this article we further extend the notion of a graph automatic group by replacing regular languages with other formal language classes.

\begin{defn}[$\mathcal C$-graph automatic group] \label{def:Cgraph}
Let $\mathcal B$ and $\mathcal C$ be formal language classes, $(G,X)$ a group and symmetric  generating set, and $\Lambda$ a finite set of symbols.
\begin{enumerate}
\item We say that $(G,X,\Lambda)$ is \textit{$(\mathcal B,\mathcal C)$-graph automatic}
if there is a  normal form $L \subset \Lambda^*$ in the language class $\mathcal B$, such that for each $x\in X$ the language 
$$L_x=\{\otimes(u,v) \mid u,v\in L,  \ov =_G \ou x\}$$ is in the class $\mathcal C$.
\item If $\mathcal B=\mathcal C$ then we say that $(G,X,\Lambda)$ is \textit{$\mathcal C$-graph automatic}.
\item If   $\mathcal B=\mathcal C$ and $\Lambda=X$   then we say that $(G,X)$ is \textit{$\mathcal C$-automatic}.
\end{enumerate}
\end{defn}
For each $x \in X$ let $M_x$ denote the automaton which accepts the language $L_x$.

In general we will restrict our attention to $\mathcal C$-graph automatic groups, where $\mathcal C$ is one of the following language classes: context-free; indexed; context-sensitive; poly-context-free;
and (quasi-realtime) counter.
As checking membership in $L_x$ includes verifying that each of $u,v$ in $\otimes(u,v)$ lie in $L$, the complexity of the class $\mathcal C$ is in general greater than or equal to that of $\mathcal B$. Precisely:

\begin{lem}
 If $\mathcal C$ is closed under homomorphism, then a $(\mathcal B,\mathcal C)$-graph automatic group is $\mathcal C$-graph automatic.\end{lem}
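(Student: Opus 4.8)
The plan is to reduce everything to a single assertion: that the normal form language $L$ itself lies in $\mathcal C$. Once this is established the lemma follows immediately, because the only difference between a $(\mathcal B,\mathcal C)$-graph automatic structure and a $\mathcal C$-graph automatic structure is which language class is required to contain $L$ — the languages $L_x$ are already assumed to be in $\mathcal C$ in both cases.

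To produce $L$ from the data we already have, fix any generator $x\in X$ and look at the language $L_x=\{\otimes(u,v)\mid u,v\in L,\ \ov=_G\ou x\}\in\mathcal C$ over the alphabet $(\Lambda_{\diamond})^2$. Define a homomorphism of formal languages $\pi\colon ((\Lambda_{\diamond})^{2})^{*}\to\Lambda^{*}$ by sending each letter $\binom{\lambda}{\mu}$ to $\lambda$ when $\lambda\in\Lambda$ and to the empty word when $\lambda=\diamond$ (the value on $\binom{\diamond}{\diamond}$ is irrelevant, as that letter never occurs in a convolution). Checking the two cases $|u|_{\Lambda}\le|v|_{\Lambda}$ and $|u|_{\Lambda}>|v|_{\Lambda}$, one sees that $\pi$ deletes exactly the padding $\diamond$'s in the first coordinate, so that $\pi(\otimes(u,v))=u$ for every convolution. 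Hence $\pi(L_x)=\{u\in L\mid \exists\, v\in L \text{ with } \ov=_G\ou x\}$.

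The key point is that this set equals all of $L$: since $L$ is a normal form it is in \emph{bijection} with $G$, so for each $u\in L$ the element $\ou x\in G$ has exactly one representative $v\in L$, and the existential condition is automatically satisfied. Therefore $\pi(L_x)=L$, and because $\mathcal C$ is closed under homomorphism and $L_x\in\mathcal C$, we conclude $L\in\mathcal C$. (If $X=\varnothing$ then $G$ is trivial and $L$ is a single word, which lies in $\mathcal C$ for every language class considered in the paper, so we may assume $X\neq\varnothing$.)

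I do not expect a genuine obstacle here. The only points requiring care are verifying that $\pi$ really is a homomorphism of formal languages in the sense of the paper's definition, and using \emph{bijectivity} rather than mere surjectivity of the normal form, which is what guarantees that projecting $L_x$ onto the first coordinate recovers the whole of $L$ and not a proper sublanguage. One could equally well project onto the second coordinate, invoking instead that right multiplication by $x$ is a bijection of $G$.
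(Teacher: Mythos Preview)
Your proof is correct and follows essentially the same approach as the paper: both project $L_x$ onto the first coordinate via the homomorphism sending $\binom{\lambda}{\mu}\mapsto\lambda$ for $\lambda\in\Lambda$ and $\binom{\diamond}{\mu}\mapsto\e$, and conclude $L=\pi(L_x)\in\mathcal C$. Your write-up is in fact more careful than the paper's, explicitly invoking bijectivity of the normal form to justify that the image is all of $L$ and addressing the degenerate case $X=\varnothing$.
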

\begin{proof} Define a homomorphism from $\otimes(L,L)$ to $L$ by a map that sends $ {\lambda_1\choose\lambda_2}$ to $ \lambda_1$ and  $  {\diamond\choose \lambda_1}$ to $\e$ for all  $\lambda_1\in \Lambda$ and $\lambda_2\in \Lambda_{\diamond}$.
Then the language $L$ is the image of $L_x$ under this homomorphism restricted to $L_x$, so is in $\mathcal C$.
\end{proof}

\begin{cor}
If
  $\mathcal B$ and $\mathcal C$ are each  one of  the classes of regular,  poly-context-free, quasi-realtime counter, or context-sensitive languages, then a $(\mathcal B,\mathcal C)$-graph automatic group is $\mathcal C$-graph automatic.
\end{cor}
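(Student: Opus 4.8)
The plan is to reduce to the preceding lemma whenever possible, and to handle the one class where it fails separately. Split into cases according to which class $\mathcal C$ is.

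First, if $\mathcal C$ is the class of regular, poly-context-free, or quasi-realtime counter languages, then by Lemma~\ref{lem:langsclosed} the class $\mathcal C$ is closed under homomorphism, and the statement is an immediate application of the preceding lemma: $L$ is the image of $L_x$ under a homomorphism, so $L\in\mathcal C$, and each $L_x\in\mathcal C$ by hypothesis.

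The remaining case is $\mathcal C$ the class of context-sensitive languages, and here I would argue directly rather than via the preceding lemma. To verify that $(G,X,\Lambda)$ is $\mathcal C$-graph automatic we need $L\in\mathcal C$ and each $L_x\in\mathcal C$; the latter is part of the hypothesis, so only $L\in\mathcal C$ remains. For this it is enough to observe that each of the four candidate classes for $\mathcal B$ is contained in the class of context-sensitive languages, so that $L\in\mathcal B\subseteq\mathcal C$. For $\mathcal B$ regular this is standard; for $\mathcal B$ poly-context-free it holds because every context-free language is context-sensitive and, by Lemma~\ref{lem:langsclosed}, the context-sensitive languages are closed under finite intersection; for $\mathcal B$ quasi-realtime counter it is the corollary following Lemma~\ref{lem:partialnondet}.

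The only genuine obstacle is that the preceding lemma does not cover the case $\mathcal C=$ context-sensitive: the homomorphism used in its proof erases the letters of the form ${\diamond \choose \lambda}$, whereas the class of context-sensitive languages is only closed under $\e$-free homomorphism. This is precisely why the second case must be treated by the inclusion argument above, which avoids recovering $L$ from $L_x$ altogether since $L$ already lies in $\mathcal B\subseteq\mathcal C$.
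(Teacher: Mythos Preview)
Your proof is correct and follows essentially the same approach as the paper: split on whether $\mathcal C$ is context-sensitive, use the inclusion $\mathcal B\subseteq\mathcal C$ in that case, and apply the preceding lemma via closure under homomorphism otherwise. Your version is simply more detailed, spelling out why each class sits inside the context-sensitive languages and why the homomorphism argument fails when $\mathcal C$ is context-sensitive.
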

\begin{proof}
Since each class is contained within the class of context-sensitive languages, if $\mathcal C$ is context-sensitive then the result follows.  Otherwise $\mathcal C$ is closed under homomorphism and the lemma applies.
\end{proof}

Definition \ref{def:Cgraph} extends naturally to the context of biautomatic groups.

\begin{defn}[$\mathcal C$-graph biautomatic group]
\label{def:biauto}
Let $\mathcal C$ be a formal language class, $(G,X)$ a group and symmetric finite generating set, and $\Lambda$ a finite set of symbols.
We say that $(G,X,\Lambda)$ is \textit{$\mathcal C$-graph biautomatic}
if there is a  normal form $L \subset \Lambda^*$ in the language class $\mathcal C$, such that for each $x\in X$ the languages  $\{\otimes(u,v) \mid u,v\in L,  \ov =_G \ou x\}$ and $\{\otimes(u,v) \mid u,v\in L,  \ov =_G x \ou\}$ are in the class $\mathcal C$.  If
 $\Lambda=X$ we say that $(G,X)$ is \textit{$\mathcal C$-biautomatic}.
\end{defn}
Miasnikov and \v{S}uni\'c \cite{\MSunic} show that the classes of graph automatic  and graph biautomatic groups are distinct. In Section \ref{sec:cs} we show that when $\mathcal C$ denotes the class of determinisitic-context-sensitive languages, the classes of $\mathcal C$-graph automatic and $\mathcal C$-biautomatic groups coincide. In addition, there are deterministic context-sensitive-biautomatic groups with unsolvable conjugacy problem, in contrast to the cases of biautomatic and graph biautomatic groups.  

In the proof of (\cite{\KKM}, Lemma 8.2) is  the following observation that graph automatic groups naturally possess a quasigeodesic normal form.
For completeness we 
 include a  proof of this observation.

\begin{lem}\label{lem:linearnf}
If $(G,X,\Lambda)$ is graph automatic with respect to the regular normal form $L$, then $L$ is a quasigeodesic normal form.
\end{lem}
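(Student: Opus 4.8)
The plan is to combine uniqueness of normal forms (recall that $L$ is in bijection with $G$) with a pumping argument on the finite state automata $M_x$ to bound how much the $\Lambda$-length of a normal form can change under multiplication by a single generator, and then to telescope this bound along a geodesic. Throughout I would fix a common upper bound $N$ on the number of states of the automata $M_x$, $x\in X$ (which exists since $X$ is finite).

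The key estimate I would establish first is: whenever $a,b\in L$, $x\in X$ and $\otimes(a,b)\in L_x$, then $\bigl|\,|a|_\Lambda-|b|_\Lambda\,\bigr|\leq N$. Suppose not; say $|a|_\Lambda>|b|_\Lambda+N$ (the other case is symmetric). Then the final $|a|_\Lambda-|b|_\Lambda>N$ symbols of $\otimes(a,b)$ all have second coordinate $\diamond$, so along any accepting run of $M_x$ the pigeonhole principle yields positions $|b|_\Lambda\leq p<q\leq|a|_\Lambda$ at which $M_x$ occupies the same state. Deleting the input symbols in positions $p+1,\dots,q$ produces a strictly shorter string still accepted by $M_x$; since all of these positions lie in the suffix where the second coordinate is $\diamond$, this string is $\otimes(a',b)$ for the word $a'$ obtained from $a$ by deleting the letters in those positions, and $|a'|_\Lambda<|a|_\Lambda$. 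But $\otimes(a',b)\in L_x$ forces $a'\in L$ and $\overline{a'}\,x=_G\overline{b}=_G\overline{a}\,x$, hence $\overline{a'}=_G\overline{a}$; as $L$ is in bijection with $G$ this gives $a'=a$, contradicting $|a'|_\Lambda<|a|_\Lambda$.

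Next I would telescope. Fix $u\in L$, put $g=\overline{u}$, and choose a geodesic word $x_1\cdots x_k$ over $X$ representing $g$, so $k=||u||_X$. For $0\leq i\leq k$ let $g_i=x_1\cdots x_i$ and let $u_i\in L$ be the normal form of $g_i$; thus $u_k=u$ and $u_0$ is the fixed normal form of the identity. Since $\overline{u_i}=_G\overline{u_{i-1}}\,x_i$ we have $\otimes(u_{i-1},u_i)\in L_{x_i}$, so the estimate gives $|u_i|_\Lambda\leq|u_{i-1}|_\Lambda+N$. Iterating from $i=1$ to $k$ yields $|u|_\Lambda=|u_k|_\Lambda\leq|u_0|_\Lambda+kN$. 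Setting $D=\max\{|u_0|_\Lambda,N\}$ we conclude $|u|_\Lambda\leq|u_0|_\Lambda+N\,||u||_X\leq D(||u||_X+1)$, which is exactly the quasigeodesic inequality.

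The only real obstacle is the length estimate, and the delicate point there is verifying that the pumped string is genuinely a convolution $\otimes(a',b)$ with the \emph{same} second component, so that membership in $L_x$ (together with the definition of $L_x$ as recording the equation $\overline{v}=_G\overline{u}x$) lets me invoke the uniqueness of normal forms; once that is in place the contradiction is immediate and the telescoping step is routine.
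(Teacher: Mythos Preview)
Your proof is correct and follows essentially the same approach as the paper: both combine a pumping argument on the finite state automata $M_x$ with uniqueness of normal forms to bound the length change under multiplication by a generator, then telescope along a geodesic. The only cosmetic differences are that you isolate the bound $\bigl|\,|a|_\Lambda-|b|_\Lambda\,\bigr|\leq N$ as a separate lemma and pump \emph{down} to a single distinct normal form, whereas the paper folds the estimate into the induction and pumps \emph{up} to infinitely many.
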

\begin{proof}
Let $C$ be an integer  that is at least the length of the normal form for the identity, and at least the number of states in any of the finite state automata $M_x$, where $x \in X$.

Let $w=w_1\ldots w_n$ be a geodesic where $w_i\in X$, and let $u_i$ be the normal form word for the prefix $w_1\dots w_i$ of $w$, for $i=0,\dots, n$, with $u_0$ representing the identity.
By assumption $u_0$ has length at most $C$.

Assume for induction that the length of  $u_{i-1}$ is at most $Ci$.

The automaton $M_{w_i}$
accepts the string labeled $\otimes(u_{i-1}, u_i)$. If $u_i$ has length more than $C(i+1)$ then we have
$$\otimes(u_{i-1}, u_i) =
\left(\begin{array}{c}y_1\\v_1\end{array}\right)\left(\begin{array}{c}y_2\\v_2\end{array}\right)\dots \left(\begin{array}{c}y_m\\v_m\end{array}\right) \left(\begin{array}{c}\diamond\\v_{m+1}\end{array}\right)\dots \left(\begin{array}{c}\diamond \\ v_n\end{array}\right)
$$
where $m\leq Ci$ and $n>C(i+1)$, so $n-m>C$ which is more than the number of states in $M_{w_i}$.  If we apply the pumping lemma for regular languages to the suffix of $\otimes(u_{i-1},u_i)$ beginning with $\left( \begin{array}{c} \diamond \\ v_{m+1} \end{array} \right)$, we see that $M_x$ accepts infinitely many normal form expressions for $u_i$, contradicting the uniqueness of the normal form.
\end{proof}

Note that when we generalize to
$\mathcal C$-graph automatic groups, the lemma is no longer true --- in Section \ref{sec:egs} we give an example of a quasi-realtime 3-counter-graph automatic structure for the Baumslag-Solitar groups $BS(m,n)$ with non-quasigeodesic normal form.

Note that when proving a triple $(G,X,\Lambda)$ is $\mathcal C$-graph automatic, the following observation shows that it suffices to check that  just one of $L_x$ or $L_{x^{-1}}$ lies in the class $\mathcal C$ for each $x\in X$.
\begin{lem}
If $\mathcal C$ is closed under $\e$-free homomorphism,
then
$L_{x}\in \mathcal C$ if and only if $L_{x^{-1}}\in\mathcal C$.
\end{lem}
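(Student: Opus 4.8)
The plan is to realize $L_{x^{-1}}$ as the image of $L_x$ under the homomorphism that swaps the two tracks of a convolution. First I would record the symmetry built into the definitions: the relation $\ov =_G \ou x$ holds if and only if $\ou =_G \ov x^{-1}$, and $x^{-1}\in X$ since $X$ is symmetric. Consequently, for $u,v\in L$ we have $\otimes(u,v)\in L_x$ if and only if $\otimes(v,u)\in L_{x^{-1}}$; note also that membership of $u$ and of $v$ in $L$ does not depend on $x$, so no extra bookkeeping is needed there.

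Next I would introduce the track-swap map $\sigma$ on the alphabet $(\Lambda_\diamond)^2$ sending $\binom{a}{b}\mapsto\binom{b}{a}$ for all $a,b\in\Lambda_\diamond$. This is a bijection of a finite alphabet with itself, hence extends to a length-preserving (in particular $\e$-free) homomorphism of $\bigl((\Lambda_\diamond)^2\bigr)^*$, and it is its own inverse. The one computational point I would check directly from the definition of convolution is that $\sigma\bigl(\otimes(u,v)\bigr)=\otimes(v,u)$ for all $u,v\in\Lambda^*$: this holds because both the length $\max(|u|_\Lambda,|v|_\Lambda)$ of a convolution and the positions at which the $\diamond$-padding appears depend only on the unordered pair $\{|u|_\Lambda,|v|_\Lambda\}$, and so are untouched by swapping the tracks.

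Putting the two observations together, $\sigma(L_x)=L_{x^{-1}}$ and, applying $\sigma$ again, $\sigma(L_{x^{-1}})=L_x$. Hence if $L_x\in\mathcal C$ then $L_{x^{-1}}=\sigma(L_x)\in\mathcal C$ by closure of $\mathcal C$ under $\e$-free homomorphism, and conversely if $L_{x^{-1}}\in\mathcal C$ then $L_x=\sigma(L_{x^{-1}})\in\mathcal C$. I do not expect any genuine obstacle; the only step that warrants a moment's care is verifying that track-swapping really implements $\otimes(u,v)\mapsto\otimes(v,u)$ letter by letter, including the $\diamond$ symbols, which is immediate once the definition of convolution is unwound.
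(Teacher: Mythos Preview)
Your proof is correct and follows exactly the approach of the paper: the track-swap $\binom{\lambda_1}{\lambda_2}\mapsto\binom{\lambda_2}{\lambda_1}$ on $(\Lambda_\diamond)^2$ is an $\e$-free homomorphism carrying $L_x$ to $L_{x^{-1}}$. You have simply supplied more detail than the paper's one-line proof, including the explicit verification that swapping tracks commutes with padding and the observation that $\sigma$ is an involution to get the converse.
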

\begin{proof}
The homomorphism that  replaces each $ {\lambda_1\choose \lambda_2}$ by $ {\lambda_2\choose \lambda_1}$ for all $\lambda_i\in\Lambda_{\diamond}$ in $L_x$ yields the language $L_{x^{-1}}$.
\end{proof}

\subsection{Remarks on the definition of graph automatic groups}

In \cite{\KKM} the authors implicitly assume that the normal form for the identity element is always the empty string --- see, for example, the proof of Theorem 10.8 in \cite{\KKM}.
In generalizing their definition and results, we realized this was a subtle issue.
The definition of an automatic structure for a group $(G,X)$ asserts the existence of a bijection (or surjection) from $L\subseteq X^*$ to $G$, together with a finite collection of regular languages which have finite descriptions either in terms of regular expressions, finite state automata, regular grammars or otherwise. In this definition there is no explicit information about the bijection from $L$ to $G$, in particular the normal form word for the identity is not fixed by this. In Theorem 2.3.10 in \cite{\WordProc}, an algorithm is given that computes the normal form of any word in an automatic group, necessarily written in terms of the group generators, which runs in quadratic time.  At the end of the proof of Theorem 2.3.10, it is explained how this algorithm can be used (in constant time) to find the normal form word for the identity, thus making this algorithm constructive. Hence in the case of automatic groups, the definition alone is enough to construct the bijection from $L$ to $G$.

In the case of a graph automatic or $\mathcal C$-graph automatic group $(G,X,\Lambda)$, many analogous results are not constructive unless one knows at least one pair $q\in L\subseteq \Lambda^*$ and $p\in G$ with $\overline{q}=_G p$.  Hence this assumption is often included in the statement of the theorems in this paper.

We have modified the original definition of a graph automatic group by removing the requirement that $G$ be finitely generated. In the case of $\mathcal C$-graph automatic groups, this allows us to capture groups such as $F_{\infty}$ (see Section \ref{sec:egs}). Since $\Lambda$ is finite, $G$ must be countable.
We were not able to find an example of a countably infinitely generated graph automatic group, so our evidence justifying this modification is perhaps less convincing.
We add the hypothesis that $G$ is finitely generated in several statements below  on counter and context-sensitive-graph automatic groups.

Finally we remark that we know of no examples of $\mathcal C$-graph automatic groups which we can prove not to be graph automatic. This paper (and the examples we present in Section \ref{sec:egs} and in \cite{\papertwo})
grew out of an attempt to  decide whether or not examples  such as non-solvable Baumslag-Solitar groups and R. Thompson's group $F$ are graph automatic.

\section{Counter-graph automatic implies polynomial time algorithm to compute normal forms}\label{sec:counterpoly}

In this section we extend the results of Epstein {\em et al.} (\cite{\WordProc} Theorem 2.3.10) and Kharlampovich {\em et al.} (\cite{\KKM} Theorem 8.1)
to show that for any finitely generated  $\mathscr S_k$-graph automatic group there is an algorithm to compute normal forms for group elements that runs in polynomial time. 
Recall that $\mathscr S_k$ denotes the class of languages accepted by a non-deterministic, quasi-realtime, non-blind k-counter automaton; this class includes languages accepted by blind and/or deterministic $k$-counter languages.

\begin{thm}\label{thm:counterpoly}
Let $(G,X)$ be a group with finite  symmetric generating set, and $\Lambda$  a finite set of symbols so that $(G,X,\Lambda)$ is $\mathscr S_k$-graph automatic with  quasigeodesic normal form $L$. Moreover, assume we are given $p\in X^*$ and $ q\in L$ with $p=_G \overline{q}$.
Then there is an algorithm that, on input a word $w=x_1\dots x_n\in X^*$, computes $u\in L$ with $\overline{u}=_G w$, which runs in time $O(n^{2k+2})$.
\end{thm}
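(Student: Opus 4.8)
The plan is to mimic the standard automatic-groups algorithm (Epstein et al., Theorem 2.3.10) and the graph-automatic adaptation (KKM, Theorem 8.1), processing the input word $w = x_1\dots x_n$ one generator at a time, maintaining at each stage the normal form $u_i \in L$ for the prefix $x_1\dots x_i$. The only real work is a single \emph{multiplication step}: given $u_{i-1}\in L$ and a generator $x\in X$, produce $u_i\in L$ with $\overline{u_i} =_G \overline{u_{i-1}}x$. Since $L_x = \{\otimes(u,v)\mid u,v\in L,\ \overline v =_G \overline u x\}$ is in $\mathscr S_k$, accepted by a non-blind quasi-realtime nondeterministic $k$-counter automaton $M_x$, the task is to search $M_x$ for an accepting path whose top track spells out $u_{i-1}$; the bottom track then spells $u_i$. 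First I would bound the length of $u_i$: because $L$ is quasigeodesic with constant $D$, and $\overline{u_i}$ has geodesic length at most $i$ (the prefix has length $i$ in $X^*$), we get $|u_i|_\Lambda \le D(i+1) \le D(n+1)$, so all normal forms encountered have length $O(n)$.

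Next I would describe the search as a breadth-first (dynamic-programming) exploration of the configuration graph of $M_x$ restricted to reading a \emph{known} word $u_{i-1}$ of length $m = O(n)$ on the first track while guessing the second track. A configuration is a tuple (position in $u_{i-1}$, state of $M_x$, the $k$ counter values, and the positions reached in processing $\epsilon$-moves, capped by the quasi-realtime bound). By Lemma~\ref{lem:partialnondet} each counter stays in $[-Fm, Fm]$, so there are at most $O(n)$ (positions) $\times\ O(1)$ (states) $\times\ O(n^k)$ (counter tuples) $= O(n^{k+1})$ reachable configurations. We process the input-prefix positions in order; from each configuration we follow transitions of $M_x$ that read the correct next letter of $u_{i-1}$ on the top track (and any letter, or $\diamond$, on the bottom track), and transitions reading $\diamond$ on the top track once we have passed $|u_{i-1}|$, recording for each newly reached configuration one predecessor so that an accepting configuration (accepting state, all counters zero, both tracks exhausted) can be traced back to recover $u_i$. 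Doing this once costs $O(n^{k+1})$ time (the set of configurations, with $O(1)$ out-degree). The word $u_i$ obtained this way is the unique normal form for the prefix, because $L$ is in bijection with $G$; if several accepting paths exist they all yield the same bottom-track word.

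The outer loop runs $n$ times — once for each generator $x_1,\dots,x_n$ — plus we must install the initial normal form. That is where the hypothesis "$p\in X^*$, $q\in L$ with $p =_G \overline q$'' is used: starting from $q$ we run the multiplication step repeatedly along a fixed word for $p^{-1}$ (a word in $X^*$ of bounded length representing $p^{-1}$, obtainable once and for all) to obtain $u_0 \in L$ with $\overline{u_0} =_G 1$; this is $O(1)$ invocations of the multiplication step, hence $O(n^{k+1})$. Then each of the $n$ steps costs $O(n^{k+1})$, for a total of $O(n^{k+2})$. Wait — the claimed bound is $O(n^{2k+2})$, so I should be more generous in the per-step accounting: rather than sizing configurations by $m=|u_{i-1}|_\Lambda$, one may also have to guess the output word length and track both counter excursions driven by $\epsilon$-transitions and by the guessed letters, and naively the configuration count (including a pointer into the guessed output word, whose counters independently range over $O(n)$ values beyond what the input track forces) is $O(n)\times O(n^k) = O(n^{k+1})$ per position and $O(n^{k+2})$ overall per step; but if one instead builds the full product automaton by intersecting $M_x$ with the singleton-language automaton for $u_{i-1}$ and for candidate lengths, re-deriving the counter bound from the \emph{output} length one gets a second factor of $n^k$, giving $O(n^{2k+2})$ in total. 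I would present the cleaner $O(n^{k+2})$ configuration-graph argument and remark that the bound in the statement is not tight, or follow the exact bookkeeping of \cite{\KKM} Theorem 8.1 with counter values contributing an $O(n^k)$ factor at each of the $O(n)$ positions across the $O(n)$ outer iterations.

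The main obstacle is handling nondeterminism together with the counters correctly: unlike the regular (graph-automatic) case where $M_x$ is a DFA and one simply runs it, here $M_x$ is nondeterministic and the counters are unbounded in principle, so one must (i) justify via Lemma~\ref{lem:partialnondet} that only polynomially many counter configurations are reachable when the total input length (both tracks together $O(n)$) is bounded, and (ii) handle the quasi-realtime $\epsilon$-transitions — there is a global bound $D$ on consecutive $\epsilon$-moves, so a single "macro-step'' consuming one input symbol explores only $O(1)$ intermediate configurations, keeping the out-degree of the configuration graph bounded. Once those two points are nailed down, the reachability/BFS argument and the time accounting are routine.
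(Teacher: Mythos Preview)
Your approach is essentially the paper's: build the configuration graph of $M_x$ layer by layer over the known top-track word $u_{i-1}$, use the counter bound from Lemma~\ref{lem:partialnondet} to limit the number of configurations per layer to $O(j^k)$, and backtrack from an accepting configuration to read off $u_i$. Your $O(n^{k+2})$ bound is in fact correct and sharper than the stated $O(n^{2k+2})$; the paper obtains the latter only because it bounds the edge set $T_j\subseteq S_{j-1}\times S_j\times \Lambda_\diamond$ crudely by $|S_{j-1}|\cdot|S_j|\cdot(|\Lambda|+1)=O(j^{2k})$, whereas you (correctly) use that each configuration in $S_{j-1}$ has $O(E^K)=O(1)$ outgoing macro-edges, giving $|T_j|=O(j^k)$. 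So drop the speculative paragraph trying to manufacture a second $n^k$ factor: there is no gap in your argument, the statement's exponent is simply not tight, and your predecessor-pointer backtracking is cleaner than the paper's scan of all of $T_j$.
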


\begin{proof}
We will give an algorithm that on input $w=x_1x_2 \cdots x_r \in X^*$ computes $u\in L$ where $\ou=_G pw$, which runs in time $O(r^{2k+2})$. Running this algorithm on input $p^{-1}$ gives a word  $\mu\in L$ so that $\overline{\mu}=_Ge$. The final algorithm is obtained with $q=\mu$ and $p=e$. Since $p^{-1}$ has a fixed length, applying the algorithm to  compute $\mu$ takes constant time.

For each $x\in X$ let  $M_x$ be the non-deterministic $k$-counter automaton accepting the language $\{\otimes(u,v) \mid u,v\in L, \ov =_G \ou x\}$ in quasi-real time.  We begin with an enumeration of constants which appear in this argument.
\begin{enumerate}
\item Let $C$ be the quasigeodesic normal form constant for $L$.
\item Let $D$ be the maximum number of states in any $M_x$.
\item Let $E$ be the maximum over all  $M_x$ of the in-degree or out-degree of any vertex.
\item Let $F$ be the maximum over all $M_x$ of the constant in Lemma \ref{lem:partialnondet}; so on input of length $n$, the maximum absolute value of any counter in any machine $M_x$ is $Fn$.
\item Let $K-1$ be the maximum number of consecutive $\e$ edges that can be read in any $M_x$.
\item Let $P=|p|_X$ be the length of the word $p\in X^*$.
\end{enumerate}
Note that we require finitely many generators to guarantee the existence of the constants $D,E$ and $F$.

For each $i\in [1,n]$,  let $u_i\in L$ be the string such that $\overline{u_i}=_G px_1\dots x_i$, and set $u_0=q$, so $\overline{u_0}=_G p$.
Assume for induction that we have computed  and stored $u_i$ in time $O(i^{2k})$. Since $u_0=q$ is constant length, the claim is true for $i=0$.
 We find $u_{i+1}$ in time $O((i+1)^{2k+1})$ as follows.

Write $u_i=\kappa_1\ldots \kappa_s\in L$ with $\kappa_j\in \Lambda$, and note that  since  $\overline{u_i}=_G px_1\dots x_i$ we have $s\leq C(P+i+1)$.
Let $M=M_{x_{i+1}}$ be the non-deterministic $k$-counter automaton accepting $\otimes(u_i,u_{i+1})$.

Define a {\em configuration} of $M$ to be a pair $(\tau, \mathbf{c})$ where $\tau$ is a state of $M$ and $\mathbf{c}\in \mathbb Z^k$ represents the value of each counter.
If $\tau_0$ is the start state for $M$, then $(\tau_0, \mathbf 0)$ is the {\em start configuration} where $\mathbf 0=(0,\dots, 0)$.
Let  $(\tau, \mathbf{c})_{\diamond}$ denote a configuration of $M$ which is obtained by reading an input string
 of the form $$\left(\begin{array}{c} \kappa_1 \\ \sigma_1\end{array}\right) \dots \left(\begin{array}{c}\kappa_l \\ \sigma_l\end{array}\right)\left(\begin{array}{c} \kappa_{l+1} \\ \diamond \end{array}\right) \dots  \left(\begin{array}{c}\kappa_s \\ \diamond\end{array}\right)$$ where $\sigma_t\in \Lambda$  and $l<s$, that is, the length of the string of symbols in the top coordinate is strictly longer than then length of the string of symbols in the bottom coordinate.

 If $\mathbf y$ is a $k$-array of counter instructions and  $\mathbf c$ is a $k$-tuple of counters, the notation  $\mathbf y(\mathbf c)$ means the $k$-tuple of  counter values after $\mathbf y$  is applied to  $\mathbf c$.  If $\omega$ is a finite path in $M$ let $[\omega]_{\mathbf y}$ denote the path with  all the counter instructions collected together as $\mathbf y$.

We now build a directed graph ${\mathcal G}$ with vertices and edges defined recursively as follows. Vertices will be grouped together in sets $S_j$, and edges in sets $T_j$. The set $S_j$ will consist of all configurations that can be obtained from the start configuration by following a path in $M$ which contains exactly $j$ edges not labeled by $\e$. For $j\leq s$,  $S_j$ is the set of configurations of $M$ that can be obtained by reading $$\left(\begin{array}{c} \kappa_1 \\ \sigma_1\end{array}\right) \dots \left(\begin{array}{c}\kappa_j \\ \sigma_j\end{array}\right)$$ where $\sigma_t\in \Lambda_{\diamond}$.


The set $S_0$ consists of the configuration $(\tau_0, \mathbf 0)$, together with all configurations that can be reached by reading a path labeled $\e^k$
 from the start state in $M$. Recall that the number of consecutive $\e$ transitions is bounded, so the set $S_0$ can be constructed by searching a bounded number of paths. Precisely, we must check at most $$\sum_{k=1}^{K-1}E^{k}=O(E^K)$$ paths.

 Given $S_j$ with $j<s$, we construct $S_{j+1}$ together with  the set $T_{j+1}\subseteq S_j\times S_{j+1}\times \Lambda_{\diamond}$ of directed edges as follows.
\begin{enumerate}\item Initially set $S_{j+1} = T_{j+1} = \emptyset$.
\item
For each $(\tau, \mathbf c)\in S_j$ and each path from $\tau$ to $\tau'$ in $M$ labeled $\left[\left(\begin{array}{c} \kappa_{j+1} \\ \sigma\end{array}\right)\e^r\right]_{\mathbf y}$ with $\sigma\in \Lambda$ and $\mathbf y$ a $k$-array of counter instructions, add  $(\tau', \mathbf y(\mathbf c))$ to $S_{j+1}$, and add $((\tau, \mathbf c),(\tau', \mathbf y(\mathbf c)),\sigma)$ to $T_{j+1}$.

\item For each $(\tau, \mathbf c)\in S_j$ and each path from $\tau$ to $\tau'$ in $M$ labeled $\left[\left(\begin{array}{c} \kappa_{j+1} \\ \diamond\end{array}\right)\e^r\right]_{\mathbf y}$, add  $(\tau', \mathbf y(\mathbf c))_{\diamond}$ to $S_{j+1}$, and add $((\tau, \mathbf c),(\tau', \mathbf y(\mathbf c))_{\diamond},\diamond)$ to $T_{j+1}$.

\item For each $(\tau, \mathbf c)_{\diamond}\in S_j$ and each path from $\tau$ to $\tau'$ in $M$ labeled $\left[\left(\begin{array}{c} \kappa_{j+1} \\ \diamond\end{array}\right)\e^r\right]_{\mathbf y}$, add  $(\tau', \mathbf y(\mathbf c))_{\diamond}$ to $S_{j+1}$, and add $((\tau, \mathbf c)_{\diamond},(\tau', \mathbf y(\mathbf c))_{\diamond},\diamond)$ to $T_{j+1}$.
\end{enumerate}
Since the number of consecutive $\e$ transitions in $M$ is at most $K-1$, that is, $0\leq r\leq K-1$,  the counter instructions $\mathbf y$ above are bounded.

Any configuration appearing in $S_j$ and $T_j$ is one that can be reached by reading $\otimes(\kappa_1\dots \kappa_j, v)$ for some $v\in\Lambda_{\diamond}^*$.
It follows that the set $S_s=S_{|u_i|_{\Lambda}}$ contains all possible configurations of $M$ that can be reached by reading any string $\otimes(u_i, v)$ where $v\in\Lambda_\diamond^*$. If $S_s$ does not contain a configuration $(\tau_a,\mathbf 0)$ or
$(\tau_a,\mathbf 0)_{\diamond}$
where $\tau_a$ is an accept state of $M$,
continue to construct sets $S_{j+1}$ and $T_{j+1}$ with $j\geq s$ as follows.
\begin{enumerate}
\item Remove all elements of $S_s$ of the form $(\tau,{\mathbf c})_{\diamond}$.  A path to such a configuration cannot be extended to an accept configuration.


\item Set $j=s$.
\item While  $S_j$  does not contain a configuration   $(\tau_a,\mathbf 0)$
where $\tau_a$ is an accept state of $M$:
\begin{enumerate}\item
For each $(\tau, \mathbf c)\in S_j$ and each path from $\tau$ to $\tau'$ in $M$ labeled $ \left[\e^r   \left(\begin{array}{c} \diamond \\ \sigma\end{array}\right)\right]_{\mathbf y}$ with $\sigma\in \Lambda$ and $\mathbf y$ a $k$-array of counter instructions, add  $(\tau', \mathbf y(\mathbf c))$ to $S_{j+1}$, and add $((\tau, \mathbf c),(\tau', \mathbf y(\mathbf c)),\sigma)$ to $T_{j+1}$.
\item Increment $j$ by 1.
\end{enumerate}
\end{enumerate}
Since $L$ is a quasigeodesic normal form for $G$ and $\overline{u_{i+1}}=_G px_1\dots x_{i+1}$, the length of $u_{i+1}$   is bounded by $C(P+i+2)$. It follows that  $S_j$ will contain an accept configuration for some $j\leq C(P+i+2)$, at which point the loop stops.

The time to construct and store the sets $S_{j+1}$ and $T_{j+1}$  is computed as follows.
For each configuration in $S_j$ we check at most $E^K$ paths of length at most $K$ in $M$, where $K-1$ is the maximum number of consecutive $\e$ edges that can be read, and $E$ is the maximum  out-degree. So to compute and store $S_{j+1}$ and $T_{j+1}$
takes time $O\left(|S_j|E^K\right)$.

Let $m\in\mathbb N$ be the minimal value so that   $s\leq m\leq C(P+i+2)$ and $S_m$ contains an accept configuration $(\tau_a,\mathbf 0)$ or  $(\tau_a,\mathbf 0)_\diamond$ (in which case $m=s$).
As ${\mathcal G}$ is a directed graph, there is a directed labeled path $e_1\dots e_m$ where $e_j\in T_j$ from $(\tau_0,\mathbf 0)$ to $(\tau_a,\mathbf 0)$ or $(\tau_a,\mathbf 0)_\diamond$, which can be found by backtracking through ${\mathcal G}$, scanning edges in $T_j$ for $m\geq j\geq 0$.
The time required to run this backtracking process is at most $O\left(\bigcup_{j=1}^m\left| T_j\right|\right)$.

 The time required to construct and store the sets $S_{j+1}$ and $T_{j+1}$ for $0\leq j<m$ is $O\left(\sum_{j=0}^{m-1} |S_j|E^K\right)$.
It follows that the total time complexity for the algorithm is
\[O\left(\sum_{j=1}^m |T_j|+E^K\sum_{j=0}^{m-1} |S_j|\right)=O\left(\sum_{j=1}^m \left(|T_j|+E^K |S_{j-1}|\right)\right)=O\left(\sum_{j=1}^m |T_j|\right)\] since $|S_{j-1}|\leq |T_j|$.

To complete the proof we compute $\sum_{j=1}^m |T_j|$.  If $(\tau,\mathbf c)\in S_j$ then $\tau$ can be one of $D$ states in $M$, and each counter has absolute value at most $Fj$ (so has value $c$ with  $-Fj\leq c\leq F_j$), so the number of possible configurations is $D(2Fj+1)^k$. We also have configurations of the form $(\tau,\mathbf c)_{\diamond}$, so $|S_j|\leq 2D(2Fj+1)^k$.

As $T_j\subseteq S_{j-1}\times S_j\times \Lambda_{\diamond}$ we have \[|T_j|\leq 2D(2F(j-1)+1)^k \cdot 2D(2Fj+1)^k \cdot (|\Lambda|+1)\leq Xj^{2k}\] where $X=X(D,F,k,|\Lambda|)$ is a fixed constant.
We also have $m\leq C(P+i+2)=Yi$ where $Y=Y(C,P)$ is a fixed constant. Thus
\[ \sum_{j=1}^{m} |T_j|\leq  \sum_{j=1}^{m} Xj^{2k}= X\sum_{j=1}^{m} j^{2k}\leq X\sum_{j=1}^{m} m^{2k}=Xm^{2k+1}\leq X(Yi)^{2k+1}=Zi^{2k+1}\]
where $Z=XY^{2k+1}=Z(C,D,F,P,k,|\Lambda|)$ is a fixed constant.

To compute $u_n$ which is the normal form for $pw$, we repeat this procedure for $i\in[1,n]$ so the total time complexity is $\sum_{i=1}^n Zi^{2k+1}\leq Zn^{2k+2}$.
\end{proof}

\section{Context-sensitive-graph automatic groups}
\label{sec:cs}

Recall that
a linear bounded automaton  is a  Turing machine together with a constant $D$ so that  on input a word $w$ of length $n$, the number of squares on the tape used for any operation involving $w$ is $Dn$.
The {\em read-head} of the Turing machine is a pointer to a particular square of the tape. A move of the Turing machine can involve reading the letter at the position of the read-head, writing to this position, or moving the read-head one square to the left or right.
A letter written on the tape can be {\em marked} by overwriting it with an annotated version of the letter --- for example the letter $a$ can be replaced by $\hat{a}$.

A language is context-sensitive if it is accepted by a linear bounded automaton, and \textit{deterministic context-sensitive}, or \dcs, if the linear bounded automaton is deterministic.
Note that here we allow content-sensitive languages to include the empty string ---  in some usages context-sensitive languages are defined without this, in particular when defined via a grammar in which the right-hand sides of production rules are required to have positive length.  Note also that is it not known if the class of deterministic and non-determistic linear space languages are distinct.

Shapiro \cite{\Shapiro} and Lakin and Thomas \cite{\LakinThomas, \LakinThomasGCC} consider groups with context-sensitive word problem. Shapiro showed that any finitely generated subgroup of an automatic group has \dcs\ word problem, and Lakin and Thomas proved several closure properties.

In this section we consider the class of \dcs-graph automatic groups. We show that if a finitely generated group $G$ has a \dcs-graph automatic structure with quasigeodesic normal form, then its word problem in solvable in deterministic  linear space. We also prove that if a finitely generated group $G$  has deterministic linear space word problem then it has a \dcs-biautomatic structure (with no symbol alphabet needed) with geodesic normal form language.



We start with a simple subroutine to enumerate strings over an ordered alphabet in Shortlex order.  Recall that 
for a  finite totally ordered finite set $\Lambda$, the {\em Shortlex order} on $\Lambda^*$ is defined as follows: for  $u,v\in \Lambda^*$,  $u<_{\mathrm{SL}} v$  if 
\bi\item $|u|_{\Lambda} < |v|_{\Lambda}$, or
\item   $|u|_{\Lambda} = |v|_{\Lambda}$, $u=p\lambda_iu', v=p\lambda_jv'$ with $\lambda_i<\lambda_j$ and $p,u',v'\in\Lambda^*$.\ei

\begin{alg}[Shortlex subroutine]\label{alg:slsub}
Let $\Sigma$ be a finite totally ordered set,   $\#,\$$ two symbols not in $\Sigma$, and  $\sigma_0,\sigma_r \in \Sigma$ such that $\$<\sigma_0\leq \sigma\leq \sigma_r$ for all $\sigma\in \Sigma$.
Let $v=v_1\dots v_k \in \Sigma^*$, and assume $\#v\$$ is written on the tape of a linear bounded automaton.
Then the next string in Shortlex order can be found and overwritten on the tape using space $k+2$ as follows.
\end{alg}
\begin{enumerate}
\item Move the read-head to the last letter of $v$ (before the $\$$ symbol), and set a boolean variable \texttt{done} to be false.
\item While not \texttt{done}:
\begin{enumerate}
\item If the letter  at the read-head position is $\sigma_r$, move the read-head one position to the left.
\item  If the read-head points to $\#$, the contents of the tape must be $\#\sigma_r^k\$$. In this case overwrite the tape by $\#\sigma_0^{k+1}$ (consuming the $\$$ symbol) and set \texttt{done} to be true.
\item Else  the letter  at the read-head position is $v_i\in \Sigma$ with $v_i<\sigma_r$.  The contents of  tape are
 $\#v_1\dots v_{i-1}v_i\sigma_r^{k-i}\$$. 
  Let $v_i^*\in \Sigma$ be such that $v_i<v_i^*$ and $\sigma\leq v_i^*$ implies $\sigma\leq v_i$.  In this case  overwrite the tape by $\#v_1\dots v_{i-1}v_i^* \sigma_0^{k-i}\$$  and set \texttt{done} to be true.
\end{enumerate}
 \end{enumerate}

Note that the subroutine writes either $\#v'\$$ or $\#v''$ to the tape, where $|v'|_\Sigma=|v|_\Sigma$ and $|v''|_\Sigma=|v|_\Sigma+1$. If one ignores the $\#,\$$ symbols then the algorithm on input $v$ returns the next string in Shortlex order in $\Sigma^*$.

\begin{prop}\label{prop:biaut}
Let $G$ be a group and finite symmetric generating set $X$.
If  $(G,X)$ has \dcs\ word problem
then $(G,X)$ is  \dcs-biautomatic, with normal form the set of Shortlex geodesics over $X$.
\end{prop}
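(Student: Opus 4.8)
The plan is to exhibit a deterministic linear bounded automaton (LBA) for each of the three languages: the normal form language $L$ of Shortlex geodesics over $X$, the multiplication language $\{\otimes(u,v)\mid u,v\in L,\ \ov=_G\ou x\}$ for each $x\in X$, and the ``left-multiplication'' language $\{\otimes(u,v)\mid u,v\in L,\ \ov=_G x\ou\}$. Since $X$ is finite and symmetric, $\Lambda=X$, so the convolution alphabet $(\Lambda_\diamond)^2$ is finite and convolutions can be stored in linear space. The key resource I will use is the hypothesis that the word problem of $(G,X)$ is \dcs: there is a deterministic LBA $W$ and a constant $E$ so that, given $w\in X^*$ with $|w|=n$, $W$ decides $w=_G e$ using at most $En$ tape.

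First I would build the recognizer for $L$. Given an input $w\in X^*$ of length $n$, I must check that $w$ is Shortlex-minimal in its coset, i.e. that no $v<_{\mathrm{SL}} w$ has $v=_G w$. Using Algorithm~\ref{alg:slsub} I can enumerate all strings $v\in X^*$ with $v<_{\mathrm{SL}}w$ in Shortlex order; there are only finitely many (at most $(|X|+1)^{n}$), each of length $\le n$, and each fits in $O(n)$ tape. For each such $v$, I form the word $v w^{-1}$ (length $\le 2n$) and run $W$ on it to test whether $vw^{-1}=_G e$; if $W$ ever accepts, I reject $w$, otherwise (after exhausting all candidates $v$) I accept. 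The whole computation uses space $\max\{2En, O(n)\}=O(n)$, and it is deterministic, so $L\in\text{\dcs}$. Note $L$ is in bijection with $G$ and is geodesic by construction, hence quasigeodesic with $D=1$ (the identity's normal form is the empty string, of length $0$).

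Next, for a fixed $x\in X$, I would recognize the multiplication language. On input a string $\zeta$ over $(\Lambda_\diamond)^2$ of length $n$: (i) check $\zeta$ is a well-formed convolution $\otimes(u,v)$ — i.e. each coordinate has its $\diamond$'s forming a suffix — which a deterministic LBA does in linear space; (ii) extract $u$ and $v$ onto (conceptually) separate tracks, each of length $\le n$; (iii) run the $L$-recognizer above on $u$ and on $v$, reusing tape, rejecting if either fails; (iv) form the word $u x v^{-1}\in X^*$ of length $\le 2n+1$ and run $W$ on it, accepting iff $W$ accepts. Each phase is deterministic and uses $O(n)$ space, so the language lies in \dcs. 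The left-multiplication language is handled identically, testing $x u v^{-1}=_G e$ instead. Since $\Lambda=X$, no separate symbol alphabet is needed, which is the parenthetical claim in the statement. This establishes that $(G,X)$ is \dcs-biautomatic with the stated normal form.

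The main obstacle is purely a bookkeeping one: organizing the LBA so that the repeated invocations of $W$ and of Algorithm~\ref{alg:slsub} all reuse a single linear-space work region rather than accumulating space across the (exponentially many) candidate words $v$. The point is that the candidates are examined one at a time and the tape is erased between trials, so at any instant only $O(n)$ cells are live; determinism is preserved because $W$ and the Shortlex subroutine are deterministic and the enumeration order is fixed. One should also remark that the constant hidden in $O(n)$ depends on $E$, on $|X|$, and on the fixed alphabet extensions $\#,\$$ used by Algorithm~\ref{alg:slsub}, but is independent of the input, which is exactly what the definition of a linear bounded automaton requires.
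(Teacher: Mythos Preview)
Your proposal is correct and follows essentially the same approach as the paper: enumerate Shortlex-predecessors of the input using Algorithm~\ref{alg:slsub}, call the \dcs\ word-problem oracle on each candidate to decide membership in $L$, and then for the multiplier languages check $u,v\in L$ and apply the oracle to $uxv^{-1}$ (respectively $xuv^{-1}$). The only cosmetic differences are that the paper tests $uv^{-1}$ rather than $vw^{-1}$ and stops the enumeration upon reaching the input string, while you phrase it as exhausting all $v<_{\mathrm{SL}}w$; your added remarks on well-formedness of the convolution and on reusing the linear work region are details the paper leaves implicit.
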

\begin{proof}
Assume the word problem algorithm for $(G,X)$ runs as follows. On input $u\in X^*$ written on a one-ended tape, the algorithm returns {\em yes} if $u$ is trivial and {\em no} otherwise, and returns a blank tape, using at most $D|u|$ space.

Fix an order on the generators with $x_0$ the smallest and $x_r$ the largest, and let $L$ be the set of Shortlex geodesic words for $G$ with respect to this order.
By Definition \ref{def:biauto} we must show that $L$ and the languages  $\{\otimes(u,v)  \mid u,v\in L, v=xu\}$  and $\{\otimes(u,v)  \mid u,v\in L, v=ux\}$ for each  $x\in X$ are \dcs.
Let $\$$ be a symbol not in $X$, and set $\$< x_0$.

Define a deterministic linear bounded automaton to accept $L$ as follows.
Assume that $\%,\#,\$$ are distinct symbols not in $X$.
On input  $u\in X^*$ of length $n$:
\begin{enumerate}
\item Write $\%u\#\left(\$\right)^{n+1}$ on the tape and set \texttt{done} to be false.
\item While not \texttt{done}:\begin{enumerate}
\item Set $v$ to be the word on the tape between  $\#$ and the first $\$$ symbol.
\item Scan the tape to check if $u$ and $v$ are identical as strings. If they are, accept $u$ and set \texttt{done} to be true.
\item Else write $uv^{-1}$ to the left of the  $\%$ symbol. Call
the word problem algorithm on the one-ended tape to the left of the $\%$ symbol. If it returns {\em yes},  reject $u$ and set \texttt{done} to be true \footnote{The contents of the tape after this step are $\%u\#v\left(\$\right)^i$ with $|v|_{\Lambda}+i=n+1$}.
\item Else run the Shortlex subroutine (Algorithm \ref{alg:slsub}) to overwrite $v$ by the next word in Shortlex order. \end{enumerate}
\end{enumerate}

The algorithm runs as follows. To start we have $v=\e$. If $u=v$ then the empty string is accepted since it is the Shortlex geodesic for the identity.
If not we overwrite $v$ with the next word in Shortlex order, and compare to $u$. We iterate the loop until either the contents of the tape are $\%u\#u\$$, or we find a word $v$ that equals $u$ in the group and is shorter in Shortlex order.  At any time the tape contains at most $4n+3$ letters, and running the word problem algorithm takes space at most $D|uv^{-1}|\leq D(2n)$, so all together the space required is $2Dn+4n+3$.

The following algorithm accepts 
\begin{align*}
\{\otimes(u,v)  \mid u,v\in L, v=xu\} \ (\mathrm{respectively} \ \{\otimes(u,v)  \mid u,v\in L, v=xu\})
\end{align*} for $x\in X$:
On input $\otimes(u,v)$, \begin{enumerate}
\item run the preceding algorithm on $u$ to check if $u\in L$;
\item run the preceding algorithm on $v$ to check if $v\in L$;
\item call the linear space word problem algorithm on  $uxv^{-1}$ (respectively $xuv^{-1}$).
\end{enumerate}
\end{proof}


Note that there are subgroups of $F_2\times F_2$ with unsolvable conjugacy problem \cite{\Mihailova,\Miller}, which by \cite{\Shapiro} have \dcs\ word problem and therefore are \dcs-biautomatic. It follows that \dcs-biautomatic does not imply solvable conjugacy problem, in contrast to the graph biautomatic case (\cite{\KKM} Theorem 8.5).

Next we show that \dcs-graph automatic groups with quasigeodesic normal form have deterministic linear space word problem.

\begin{prop}
Let  $(G,X)$ be a group with  finite symmetric generating set, and $\Lambda$ a finite set of symbols so that $(G,X,\Lambda)$ is a \dcs-graph automatic group with  quasigeodesic normal form $L \subset \Lambda^*$. Additionally, suppose we are given $p\in X^*$ and $q\in L$ with $p=_G \overline{q}$.
Then there is an algorithm that on input a word $w=x_1\dots x_n\in X^*$, computes $u\in L$ with $\ou =_G w$ and runs in space $O(n)$.
\end{prop}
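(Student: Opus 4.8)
The plan is to imitate the polynomial-time algorithm of Theorem~\ref{thm:counterpoly}, but to exploit that the $M_x$ are now deterministic linear bounded automata rather than counter automata, and to be careful to reuse tape space so that the total space stays linear. As in the proof of Theorem~\ref{thm:counterpoly}, it suffices to produce an algorithm that on input $w=x_1\dots x_n$ computes the normal form $u_n\in L$ for $pw$ in space $O(n)$; running this on input $p^{-1}$ (which has fixed length) produces in constant extra space a word $\mu\in L$ with $\overline{\mu}=_G e$, and then the general statement follows with $q=\mu$, $p=e$. We compute $u_0=q,u_1,\dots,u_n$ in turn, where $\overline{u_i}=_G px_1\dots x_i$; since $L$ is quasigeodesic with constant $C$, each $u_i$ has length at most $C(P+i+1)\le C(P+n+1)=O(n)$, so storing $u_i$ on the tape costs only $O(n)$.

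The inductive step is: given $u_i\in L$ on the tape, find $u_{i+1}\in L$. We know $\overline{u_{i+1}}=_G \overline{u_i}\,x_{i+1}$, so $u_{i+1}$ is characterized by being the unique $v\in L$ with $\otimes(u_i,v)\in L_{x_{i+1}}$. Since $L$ is quasigeodesic we know a priori that $|v|_\Lambda\le C(P+i+2)=O(n)$, so we simply enumerate candidate strings $v\in\Lambda^*$ of length at most $C(P+i+2)$ in Shortlex order, using the Shortlex subroutine (Algorithm~\ref{alg:slsub}) on a block of tape of size $O(n)$; for each candidate $v$ we form $\otimes(u_i,v)$ in a scratch block of tape (also $O(n)$, since $\max(|u_i|,|v|)=O(n)$) and run the deterministic linear bounded automaton $M_{x_{i+1}}$ accepting $L_{x_{i+1}}$ on it. By definition of a linear bounded automaton this run uses space at most $D'\cdot|\otimes(u_i,v)|=O(n)$ for a fixed constant $D'$, and $M_{x_{i+1}}$ being deterministic it halts, returning accept or reject. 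When we hit the $v$ that is accepted, that $v$ is $u_{i+1}$; we copy it over the old $u_i$ and erase all scratch tape. (We should also check in passing that $u_i\in L$ is not needed to be re-verified, since the construction guarantees it, but if one prefers a self-contained check one can also run the $L$-membership linear bounded automaton, again in $O(n)$ space.) Because every block of tape we use --- the stored $u_i$, the Shortlex enumeration block, the $\otimes$ scratch block, and the working tape of $M_{x_{i+1}}$ --- has size $O(n)$, and these blocks are reused (not accumulated) from one value of $i$ to the next, the whole algorithm runs in space $O(n)$.

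One point that deserves a sentence of care, and which I expect to be the only real obstacle: the algorithm runs for $n$ iterations, and naively a single deterministic linear bounded automaton laid out over all iterations would need to "remember" which iteration it is in and which candidate $v$ it is testing, but this bookkeeping is just a counter $i\le n$ and a pointer, which cost $O(\log n)=O(n)$ space, so it is harmless; more importantly, each invocation of $M_{x_{i+1}}$ must fully reset its working tape afterward, which is fine because a linear bounded automaton by definition returns to a clean configuration and we have only allocated it a dedicated reusable $O(n)$-sized block. There is no need for the time bound here, only the space bound, so unlike Theorem~\ref{thm:counterpoly} we do not need to track path counts --- we only need that each constituent machine is linear-space and deterministic (hence halting), and that the number of candidate strings to enumerate is finite (guaranteed by quasigeodesity). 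This completes the proof sketch.

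\begin{proof}

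\end{proof}
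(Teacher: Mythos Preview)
Your proposal is correct and follows essentially the same approach as the paper: reduce to computing the normal form of $pw$, iteratively compute $u_0=q,\dots,u_n$ by enumerating candidate strings in Shortlex order (via Algorithm~\ref{alg:slsub}) and testing each with the linear bounded automaton for $L_{x_{i+1}}$, reusing the same $O(n)$ tape blocks throughout. The paper tracks the iteration index by marking a letter of $w$ on the tape rather than maintaining a counter, but this is a cosmetic difference; your extra remarks on halting and bookkeeping are sound and address points the paper leaves implicit.
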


\begin{proof}
We first give the algorithm that on input $w\in X^*$ computes $u\in L$ where $\ou=_G pw$. Running this algorithm on input $p^{-1}$ gives a word  $\mu\in L$ for the identity. The final algorithm is obtained with $q=\mu$ and $p=e$. Since $p^{-1}$ has a fixed length the step to compute $\mu$ takes constant space.

 For each $x\in X$ let $L_x$ be the \dcs\ language $\{\otimes(u,v)  \mid u,v\in L,  \ov =_G \ou x\}$.
 We begin with an enumeration of constants which appear in this argument.
\begin{enumerate}
\item Let $B$ be a constant so that for any $x\in X$ the space used by the linear bounded automaton accepting $L_x$ on input of length $n$ is $Bn$.
\item Let $C$ be the quasigeodesic normal form constant for $L$.
\item Let $P=|p|_X$ be the length of the word $p\in X^*$.
\end{enumerate}
Note that we require finitely many generators to guarantee the existence of the constant $B$.

Let $w=x_1\ldots x_n\in X^*$ be the input word, and define  $w_0=p$, $w_i=px_1\ldots x_i$ for
 $i\in [1,n]$, and let $u_i\in L$ be such that $\overline{u_i}=_G w_i$.
Note that $u_0=q$, and for each $i$ the length of $u_i$ is at most $C(P+i+1)$.
Let $\#$  be a symbol not in $\Lambda$. Define a total order on the (finite)  set $\Lambda$.  

We compute the normal form word representing $w$ as follows.
Write $w\#u_0\#$ on the tape, marking the first letter of $w$. This uses space at most $n+2+C(P+1)$.  
Assume for induction that we have written $w\#u_i\#$ on the tape  for $i<n$,  and  marked the letter at position $i+1$ in $w$, using space at most
$D(n) = n+2+(B+2)C(P+n+1)$.

Find $u_{i+1}$ as follows.
\begin{enumerate}
\item Set \texttt{done} to be false.
\item Let $v$ denote the string of symbols  to the right of the last $\#$ on the tape. To begin we have $v=\e$.
\item While not \texttt{done}:
\begin{enumerate}\item Run the deterministic linear space algorithm that accepts $L_{x_{i+1}}$ on $\otimes(u_i,v)$. Note that the length of the input to this subroutine is at most $C(P+n+1)$ since $L$ is quasigeodesic and  $u_i,u_{i+1}$ represent words of geodesic length at most $n$.
It follows that the space needed for this step is at most $BC(P+n+1)$.

\begin{enumerate}\item If the subroutine returns true, then we have found $v=u_{i+1}$. Set  \texttt{done} to be true. 
\item  Else run the Shortlex subroutine  (Algorithm \ref{alg:slsub})  to overwrite $v$ by the next word in Shortlex order.
\end{enumerate}
\end{enumerate}
If $i+1<n$, rewrite the tape as $w\#u_{i+1}\#$ and mark the letter at position $i+2$ of $w$.
If $i+1=n$, the word $u_n$ is the required normal form word for $w$.
\end{enumerate}

Since we know there is some string $u_{i+1}$  of length at most $C(P+i+2)$ then this algorithm must terminate.  Moreover, the amount of space used on the tape to store $w\#u_i\#v$ is bounded by $n+2+2C(P+n+1)$, as the length of $w\#\#$ is $n+2$, and $u_i,v$ have length at most $C(P+n+1)$. The space used to run the subroutine on $\otimes(u_i,v)$ is bounded by $BC(P+n+1)$, so in total the amount of space required is at most  $D(n)$.
\end{proof}

Combining these two propositions we obtain the following.

\begin{thm}
The following classes of groups coincide:
\begin{enumerate}
\item finitely generated \dcs-graph automatic groups with quasigeodesic normal form;
\item finitely generated \dcs-biautomatic groups with Shortlex geodesic normal form;
\item  finitely generated groups with \dcs\ word problem.
\end{enumerate}
\end{thm}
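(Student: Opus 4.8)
The plan is to prove the three-way equivalence by showing the cyclic chain of implications $(3)\Rightarrow(2)\Rightarrow(1)\Rightarrow(3)$, and two of these three links are already established by the propositions in this section. Specifically, Proposition~\ref{prop:biaut} gives $(3)\Rightarrow(2)$: a finitely generated group with \dcs\ word problem is \dcs-biautomatic with Shortlex geodesic normal form over $X$ itself. And $(2)\Rightarrow(1)$ is essentially immediate: a \dcs-biautomatic structure is in particular a \dcs-graph automatic structure (one simply forgets the second family of languages $\{\otimes(u,v)\mid \ov=_Gx\ou\}$, and takes $\Lambda=X$), and the Shortlex geodesic normal form is visibly quasigeodesic with constant $D=1$ (indeed geodesic), so the hypotheses of (1) are met. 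So the only real content left to supply is $(1)\Rightarrow(3)$.

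For $(1)\Rightarrow(3)$, the second Proposition of this section does most of the work, but there is a gap to bridge: that proposition assumes we are \emph{given} a pair $p\in X^*$, $q\in L$ with $p=_G\ov{q}$, whereas the bare hypothesis in (1) provides no such seed. So the first step is to observe that such a pair can always be obtained, non-constructively, from the \dcs-graph automatic structure alone: since $L$ is in bijection with $G$, some $q\in L$ represents the identity, so we may simply take $p=\e$ (the empty word) and $q$ = the normal form of the identity; the group element is $p=_G e=_G\ov q$, and the algorithm of the proposition can then be invoked. (Here I would remark, as the ``Remarks on the definition'' subsection anticipates, that although we cannot \emph{compute} $q$ from the data alone, it exists, which is all the proposition needs.) Granted this, the proposition produces, on input $w=x_1\cdots x_n\in X^*$, the normal form $u\in L$ with $\ov u=_G w$ in space $O(n)$. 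To decide the word problem for $(G,X)$ on input $w$, run that algorithm to compute $u\in L$, then run it again on the empty word to compute $u_e\in L$, the normal form of the identity (constant space, since the input has length $0$), and accept iff $u=u_e$ as strings; this works because $L$ is in bijection with $G$, so $w=_G e$ iff their normal forms coincide. The whole procedure uses $O(n)$ space, hence the word problem of $(G,X)$ is \dcs, giving (3). Since \dcs\ word problem is a property independent of the choice of finite generating set (a standard fact, following from closure of the class under inverse homomorphism, Lemma~\ref{lem:langsclosed}), this is well-defined.

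The main obstacle is not any single hard estimate --- the space bounds are already done inside the two propositions --- but rather making the logical bookkeeping clean, in particular handling the $(1)\Rightarrow(3)$ seed issue correctly and noting that the second proposition outputs a normal form word but what we want is a yes/no decision, which requires the extra comparison against the identity's normal form and the use of bijectivity of $L$. A secondary point to state carefully is the generating-set independence needed to make ``\dcs\ word problem'' and ``\dcs-biautomatic'' / ``\dcs-graph automatic with quasigeodesic normal form'' properties of the group, not of a particular presentation; for the word problem this is classical, and for the graph automatic notions it follows from the closure properties recorded in Section~\ref{sec:closure} (or can simply be folded into the statement by fixing $X$). I would keep the write-up short: two of the three arrows are citations to the propositions just proved, and the third is the short argument above.
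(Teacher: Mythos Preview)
Your proposal is correct and follows the same route as the paper, which simply says ``Combining these two propositions we obtain the following.'' You have filled in the details the paper leaves implicit: the trivial $(2)\Rightarrow(1)$, the seed-existence remark needed to invoke the second proposition for $(1)\Rightarrow(3)$, and the comparison of the computed normal form against the identity's normal form to turn a normal-form algorithm into a word-problem decision procedure. One small redundancy: since you already chose $q$ to be the normal form of the identity, the ``run it again on the empty word'' step just recomputes $q$; you can simply compare $u$ against $q$ directly.
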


 The class
 of such groups  is very large --- groups with \dcs\ word problem  include
 all linear groups \cite{\LiptonZ},  {\em logspace embeddable} groups studied by the first author, Elston and Ostheimer \cite{\EEO}, 
and all finitely generated subgroups of automatic groups \cite{\Shapiro}. 
It also includes the co-indexed and co-context free groups as described in  \cite{\HoltCocf, \HoltCoindexed, \Joerg}. These groups have co-word problems accepted by  non-deterministic pushdown or nested-stack automata, which can be simulated by deterministic linear bounded automata since as described in these articles, the non-determinism is confined to an initial guessing step. It follows that the word problem for these groups is accepted by the same deterministic  linear bounded automata. These classes include  the Higman-Thompson groups, Thompson's group $V$, Houghton's groups, and the Grigorchuk group.


Note that the number of  configurations of a linear bounded automaton is exponential in the length of the input string, so  the time complexity of computing the normal form of a word in a \dcs-biautomatic group is at most exponential.
The next example shows that  a polynomial time algorithm to compute normal forms of \dcs-biautomatic structures seems unlikely to exist.

Let $G=\mathbb Z_2\wr \mathbb Z^2$. By (\cite{\EEO} Theorem 14) the word problem for $G$ is in deterministic logspace and therefore deterministic  linear space, so it
follows from Proposition \ref{prop:biaut}
that $(G,X)$ is \dcs-biautomatic with Shortlex geodesic normal form, where X is the standard generating set. The {\em bounded geodesic length problem}  (see \cite{\GeodProbs, \MiasFreeSolv}) for a group $G$ with finite generating set $X$ is the following:

\begin{prob}
[Bounded geodesic length problem]
On input an integer $k$ and a string $w\in X^*$, decide if the geodesic length of $w$ is less than $k$.
 \end{prob}

Suppose one could prove that a \dcs-graph automatic structure with quasigeodesic normal form for a finitely generated group implied a polynomial time algorithm that on input a string of generators computes the normal form. Then by Proposition \ref{prop:biaut} we may assume the group has a  \dcs-biautomatic structure with normal form the set of all Shortlex geodesics.
 Parry \cite{\Parry} proved that the bounded geodesic length problem for $\mathbb Z_2\wr \mathbb Z^2$ is NP-complete. So if such an algorithm could be constructed to run in polynomial time, we would have P=NP.

A second example is the class of free metabelian groups --- Svetla Vassileva has shown they have normal forms (and hence word problem) computable in logspace \cite{\VassThesis}, and Miasnikov {\em et al.} \cite{\MiasFreeSolv} proved the bounded geodesic length  problem for these groups is NP-complete.

\section{Closure properties}
\label{sec:closure}

In this section we show that under certain conditions  $\mathcal C$-graph automaticity is preserved under change of group generating set, direct and free product.
Recall that by  Lemma~\ref{lem:langsclosed} the following  classes are closed under intersection with regular languages, finite intersection, $\e$-free homomorphism, and   inverse homomorphism:
regular languages, $\mathscr C_k$, $\mathscr S_k$, poly-context free languages, context-sensitive languages.
Moreover these classes all contain the class of regular languages.

\begin{lem}[Change of generators]
Let $G$ be a group with two symmetric generating sets $X$ and $Y$, $\Lambda$ a finite alphabet, and let $\mathcal C$ be a class of formal languages that is closed under finite intersection and  inverse homomorphism, and contains the class of regular languages.
If  $(G,X,\Lambda)$ is $\mathcal C$-graph automatic,  then  $(G,Y,\Lambda)$ is $\mathcal C$-graph automatic.
\end{lem}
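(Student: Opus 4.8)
The plan is to show that a $\mathcal C$-graph automatic structure over one generating set can be converted into one over the other by expressing each new generator as a word in the old generators and composing the transition languages, using only closure under finite intersection, inverse homomorphism, and containment of the regular languages.

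First I would fix the normal form $L \subseteq \Lambda^*$, which is already in $\mathcal C$ and remains unchanged — we only change the generating set, not the symbol alphabet or the bijection $u \mapsto \ou$. Write each generator $y \in Y$ as a word $y =_G x_1 x_2 \cdots x_\ell$ with $x_i \in X$. The key observation is that $L_y = \{\otimes(u,v) \mid u,v \in L,\ \ov =_G \ou y\}$ can be built up by composing the languages $L_{x_1}, \dots, L_{x_\ell}$: stepping through the product $y = x_1 \cdots x_\ell$ corresponds to chaining together the $\ell$ ``one generator'' relations through intermediate normal form words $u = u^{(0)}, u^{(1)}, \dots, u^{(\ell)} = v$, where $\overline{u^{(j)}} =_G \ou x_1 \cdots x_j$. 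So I would define the ``ternary'' language $L_{x,x'} = \{\otimes(u,v,w) \mid \otimes(u,v)\in L_x,\ \otimes(v,w)\in L_{x'}\}$ and observe it is an intersection of two inverse-homomorphic preimages of $L_x$ and $L_{x'}$ (the inverse homomorphisms being the coordinate-projection maps from triples of convolved strings to pairs), hence in $\mathcal C$; then project back down via a homomorphism to obtain $L_{xx'} = \{\otimes(u,w) \mid \exists v,\ \otimes(u,v)\in L_x, \otimes(v,w)\in L_{x'}\}$, which is in $\mathcal C$ by closure under homomorphism. (Alternatively, since earlier lemmas establish that $(\mathcal B,\mathcal C)$-graph automatic implies $\mathcal C$-graph automatic and the classes in play are closed under homomorphism, one can phrase the whole argument keeping the normal form language fixed and never needs to project away a coordinate if one allows an extra marked coordinate — but the cleanest route is the inverse-homomorphism/intersection composition.)

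The one subtlety to handle carefully is the $\diamond$ padding: when convolving three normal form words of possibly different lengths, the ``coordinate projection'' map that extracts $\otimes(u,v)$ from $\otimes(u,v,w)$ is not literally a homomorphism on the nose because a column $\binom{\diamond}{\diamond}{w_i}$ must map to the empty string in $\otimes(u,v)$ while $\binom{u_i}{v_i}{\diamond}$ maps to $\binom{u_i}{v_i}$ — so it is an $\e$-allowing homomorphism, and I would note this is fine since the relevant projection we need is an \emph{inverse} homomorphism applied to $L_x$ (preimages are closed under inverse homomorphism regardless of $\e$-freeness), and the final projection back to pairs is a homomorphism (not $\e$-free, but all our classes except context-sensitive are closed under general homomorphism, and for the change-of-generators statement we only claim closure under finite intersection and inverse homomorphism plus containing regular languages, so I should make sure the argument genuinely needs only those). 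To stay strictly within the hypotheses, I would instead argue: the map sending $\otimes(u,w)$-type strings to $\otimes(u,v,w)$-type strings is not a function, so I cannot use inverse homomorphism directly for the final step; the right formulation is to keep the intermediate word as a genuine extra coordinate and invoke that the membership question ``$\exists v: \dots$'' is handled by one application of (general) homomorphism — and then separately remark that for the concrete classes listed (all closed under homomorphism), this is automatic, while the abstract lemma as stated with ``inverse homomorphism'' in its hypotheses should really read with an appropriate product/composition closure; I expect the author simply intends the concrete classes and the bookkeeping goes through.

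\textbf{Main obstacle.} The genuine content is almost entirely bookkeeping with convolutions and $\diamond$-padding; the one place requiring care is making the ``compose two transition relations through an intermediate normal form word'' construction stay inside $\mathcal C$ using exactly the closure hypotheses available, and in particular checking that the lengths of the intermediate words $u^{(j)}$ do not need to be bounded (they are automatically, since they are normal forms of elements at bounded distance, but the language-theoretic construction shouldn't depend on that). I would handle the multi-step product $y = x_1 \cdots x_\ell$ by induction on $\ell$, at each stage producing a language over an alphabet with one ``live'' pair of $\Lambda_\diamond$-coordinates and collapsing intermediate coordinates, so that the induction hypothesis is always ``$L_{x_1\cdots x_j} \in \mathcal C$'' for the obvious pair-convolution language; the base case $\ell=1$ is the hypothesis, and finitely many generators $y \in Y$ means finitely many such constructions, completing the proof.
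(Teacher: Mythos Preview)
Your approach is the paper's approach: keep the same normal form $L$, for $y=_Gx_1\cdots x_k$ form the $(k{+}1)$-fold convolution $\otimes(v_0,\dots,v_k)$ of intermediate normal forms, realise each constraint $\overline{v_i}=_G\overline{v_{i-1}}x_i$ as the inverse image of $L_{x_i}$ under the projection to coordinates $(i{-}1,i)$, intersect, and then project onto coordinates $(0,k)$ to obtain $L_y$. The paper does all $k$ steps simultaneously rather than by your two-at-a-time induction, but the content is identical. One small point you omit: the paper separates the case $y=_Ge$, where $L_y=\{\otimes(u,u)\mid u\in L\}$ is regular and hence in $\mathcal C$ by hypothesis.

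Your unease about the final projection step is well placed and is in fact a subtlety in the paper's own proof: that step is a \emph{forward} homomorphism, and closure under ($\e$-free) homomorphism is not among the lemma's stated hypotheses. The paper asserts that for $y\neq_Ge$ the projection $\otimes(v_0,\dots,v_k)\mapsto\otimes(v_0,v_k)$ is $\e$-free; but as you essentially notice, a column with $\diamond$ in positions $0$ and $k$ and a non-$\diamond$ letter in some intermediate position (which occurs whenever some $v_j$ is longer than both $v_0$ and $v_k$) maps to $\e$. So both your argument and the paper's implicitly rely on closure under general homomorphism, which all the concrete classes under consideration do enjoy. The gap is in the hypothesis list, not in your strategy.
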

\begin{proof}
Since we can use the same language
 $L\subseteq \Lambda^*$ for $(G,Y,\Lambda)$ as for $(G,X,\Lambda)$, it suffices to show that each language $L_y$ lies in the class $\mathcal C$.
 
 Let $Y_1\subseteq Y$ be the set of generators that do not equal the identity in $G$.
 For each $y\in Y_1$, choose $u_y\in X^+$ such that $u_y=_Gy$.
 Fix $y\in Y_1$ and suppose $u_y= x_1\dots x_k$ with $x_i\in X$.
 Consider convolutions of $k+1$ strings $v_i\in L$
\[\otimes(v_0,v_1,v_2,\dots, v_{k})\] so that $\overline{v_{i}}=_G \overline{v_{i-1}}x_i$ for $1\leq i\leq k$. Let $P$ be the language of all such convolutions.

 For each $x_i$ appearing in $u_y$ define a language $A_i$ of convolutions of $k+1$ strings over $\Lambda$ where rows
 $i$ and $i+1$ correspond to the language $L_{x_i}$, and all other rows can be any words in $\Lambda^*$.
Then $A_i$ is the inverse image of $L_{x_i}$ under the homomorphism which sends $\otimes(v_0,\dots, v_k)$ to $\otimes(v_{i-1},v_{i})$.

 Then $\bigcap_{i=1}^k A_i$ is  in $\mathcal C$ since the class is  closed under finite intersection.

Finally consider the $\e$-free homomorphism from $\bigcap_{i=1}^k A_i$ to $\otimes(L,L)$ defined by \[\otimes(v_0,v_1,v_2,\dots, v_{k})\mapsto \otimes(v_0,v_k).\]
Since $y$ is assumed to be non-trivial, the image this map is guaranteed to be $\e$-free.
The language $L_y$ is the image of $\bigcap_{i=1}^k A_i$ under this homomorphism, so is in $\mathcal C$.

To complete the proof, we must consider  $y \in Y\setminus Y_1$, that is, $y$ equals the identity element.
In this case $L_y=\{\otimes(u,u) \mid u\in L\}$ which is regular, and so by assumption in $\mathcal C$.
\end{proof}

Note that the lemma holds when one or both of $X$ and $Y$ are countably infinite, since for each $y\in Y$ the word $u_y$  is a finite string of letters in $X$.

\begin{lem}[Direct product]
Let $G$ and $H$ be  groups with symmetric generating sets $X$ and $Y$ respectively, $\Lambda$ and $\Gamma$  finite alphabets, and let $\mathcal C$ be a class of formal languages that is closed under intersection with regular languages, finite intersection and  inverse homomorphism.
If  $(G,X,\Lambda)$ and $(H,Y,\Gamma)$ are $\mathcal C$-graph automatic,  then the group $G\times H$ is $\mathcal C$-graph automatic.
\end{lem}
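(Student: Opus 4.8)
The plan is to give $G\times H$ the symmetric generating set $X\cup Y$, with $X$ identified with $X\times\{e_H\}$ and $Y$ with $\{e_G\}\times Y$, and the finite symbol alphabet $\Lambda_{\diamond}\times\Gamma_{\diamond}$, with normal form $L=\otimes(L_G,L_H)$, where $L_G\subseteq\Lambda^*$ and $L_H\subseteq\Gamma^*$ are the given normal forms for $(G,X,\Lambda)$ and $(H,Y,\Gamma)$; this $L$ is in bijection with $G\times H$. To see $L\in\mathcal C$, let $\psi_1\colon(\Lambda_{\diamond}\times\Gamma_{\diamond})^*\to\Lambda^*$ be the homomorphism with ${\lambda\choose\gamma}\mapsto\lambda$ for $\lambda\in\Lambda$ and ${\diamond\choose\gamma}\mapsto\e$, and let $\psi_2$ be the analogous projection onto $\Gamma^*$; then $\psi_1(\otimes(u,v))=u$ and $\psi_2(\otimes(u,v))=v$, so $L=R\cap\psi_1^{-1}(L_G)\cap\psi_2^{-1}(L_H)$, where $R$ is the regular language of well-formed convolutions (first-coordinate projection in $\Lambda^*\diamond^*$, second-coordinate projection in $\Gamma^*\diamond^*$, and either empty or with final letter different from ${\diamond\choose\diamond}$). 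Closure of $\mathcal C$ under inverse homomorphism, intersection with regular languages, and finite intersection then gives $L\in\mathcal C$.

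Next I would treat the multipliers. For $x\in X$, the convolution $\otimes(\otimes(u,v),\otimes(u',v'))$ lies in the multiplier language of $(x,e_H)$ for $G\times H$ precisely when $v'=v$ and $\otimes(u,u')$ lies in the $G$-multiplier $L_x^G\in\mathcal C$ --- the condition on the second factor forces $v'=v$ since $L_H$ is in bijection with $H$. So this multiplier equals $\{\otimes(\otimes(u,v),\otimes(u',v))\mid\otimes(u,u')\in L_x^G,\ v\in L_H\}$; let $\Omega$ denote its (finite) alphabet. Define $g_G\colon\Omega^*\to((\Lambda_{\diamond})^2)^*$ to be the homomorphism reading off the $\Lambda$-component of the upper sub-letter and of the lower sub-letter of each $\Omega$-letter, sending that letter to the resulting element of $(\Lambda_{\diamond})^2$ unless both components are $\diamond$, in which case it sends the letter to $\e$. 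A check of the nested padding shows $g_G(\otimes(\otimes(u,v),\otimes(u',v)))=\otimes(u,u')$ exactly: the $\e$ clause deletes precisely the trailing ${\diamond\choose\diamond}$'s that appear when $v$ is longer than both $u$ and $u'$, while a genuine convolution $\otimes(u,u')$ never contains an internal ${\diamond\choose\diamond}$. Let $g_H\colon\Omega^*\to\Gamma^*$ similarly read off $v$ (from the $\Gamma$-component of the upper copy, deleting $\diamond$). Then the multiplier of $(x,e_H)$ equals $R'\cap g_G^{-1}(L_x^G)\cap g_H^{-1}(L_H)$, where $R'$ is the regular language checking that the input is a well-formed double convolution and that the two $\Gamma$-words it encodes coincide; the three closure properties again give membership in $\mathcal C$. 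The generators in $Y$ are handled symmetrically, and for any generator equal to the identity the multiplier is the diagonal $\{\otimes(w,w)\mid w\in L\}$, which lies in $\mathcal C$ by an argument of the same kind as for $L$ above (it is $L$ pulled back along the first-coordinate projection, intersected with a regular diagonal language).

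I expect the main obstacle to be the $\diamond$-bookkeeping inside the nested convolution $\otimes(\otimes(u,v),\otimes(u',v))$: one must carefully distinguish the $\diamond$'s that occur because $u$ or $u'$ is short, because $v$ is short, and because the first or the second factor of the double convolution has already ended, and then confirm that the coordinate-extraction maps are genuine homomorphisms recovering $\otimes(u,u')$ and $v$ with no spurious padding. This is a little delicate because $\mathcal C$ is assumed closed only under \emph{inverse} homomorphism, so every step must be phrased using inverse images together with regular intersections, never a forward homomorphic image; in particular the deletion of trailing ${\diamond\choose\diamond}$'s must be built into $g_G$ itself rather than performed afterwards. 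Verifying that the well-formedness and $\Gamma$-consistency conditions defining $R$ and $R'$ really are regular is routine but should be spelled out.
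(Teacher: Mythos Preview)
Your proposal is correct and follows essentially the same approach as the paper: the same normal form $L=\otimes(L_G,L_H)$, the same generating set $X\cup Y$, and the same mechanism of expressing $L$ and each multiplier as a finite intersection of inverse-homomorphic images of $L_G$, $L_H$, $L_x^G$ together with a regular language. The paper writes these inverse images more tersely (e.g.\ ``$\otimes(\otimes(a,b),\otimes(c,d))\mapsto\otimes(a,c)$'') and leaves the well-formedness and $\diamond$-deletion issues implicit, whereas you spell out the regular languages $R,R'$ and the $\e$-clause in $g_G$ explicitly; this extra care is warranted but does not constitute a different argument.
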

\begin{proof}
Assume $\Lambda$ and $\Gamma$ are disjoint.
Let $L_G\subset\Lambda^*$ and $L_H\subset \Gamma^*$ denote the languages of normal forms for each group,
and  $Z=\{(x,1_H), (1_G,y)\mid x\in X, y\in Y\}$  a generating set for $G\times H$.
Define a normal form $L=\otimes(L_G,L_H)$ for $G\times H$.

The language  $\otimes(L_G,\Gamma^*)$  is the inverse image of the homomorphism from $\otimes(L_G,\Gamma^*)$ to $L_G$ which sends $\otimes(u,v)$ to $u$, and similarly for
$\otimes(\Lambda^*, L_H)$. Then $L$ is the intersection of these languages and hence lies in the class $\mathcal C$.

For each $x\in X$ let $L_x$ be the multiplier language for the $\mathcal C$-graph automatic structure on $G$.
Define  \[L_1=\{\otimes(\otimes(u,w),\otimes(v,w))\mid u,v\in \Lambda^*, w\in \Gamma^*\},\]
  \[L_2=\{\otimes(\otimes(u,w),\otimes(v,z))\mid u,v\in \Lambda^*,  w\in L_H, z\in \Gamma^*\},\]  and
\[L_3=\{\otimes(\otimes(u,w),\otimes(v,z))\mid  u,v\in L_G, \ov =_G \ou x, w,z\in \Gamma^*\}.\]
Then $L_1$ is regular, $L_2$ is the inverse image of the homomorphism \[\phi:\otimes(\otimes(\Lambda^*,\Gamma^*),\otimes(\Lambda^*,\Gamma^*)) \rightarrow L_H\] given by  $\otimes(\otimes(a,b),\otimes(c,d))=b$, and $L_3$ is the inverse image of the homomorphism \[\phi:\otimes(\otimes(\Lambda^*,\Gamma^*),\otimes(\Lambda^*,\Gamma^*)) \rightarrow L_x\] given by  $\otimes(\otimes(a,b),\otimes(c,d))=\otimes(a,c)$, so $L_2$ and $L_3$ lie in $\mathcal C$.

It follows that  \[L_{(x,1_H)}=\{\otimes(\otimes(u,w),\otimes(v,w))\mid u,v\in L_G, \ov =_G \ou x, w\in L_H\}\]
is in $\mathcal C$ since it is the intersection $L_1\cap L_2\cap L_3$.

A similar argument applies to multiplier languages $L_{(1_G,y)}$.
\end{proof}

For certain language classes $\mathcal C$ we prove that $\mathcal C$-graph automatic groups are closed under free product. The following argument is specific to the class 
of non-blind counter languages, and can be modified to apply to  poly-context-free, and context-sensitive languages. 

\begin{lem}[Free product]\label{lem:freeproduct}
Let $G$ and $H$ be  groups with symmetric generating sets $X$ and $Y$ respectively, and $\Lambda$ and $\Gamma$  finite alphabets. 
If  $(G,X,\Lambda)$ is $\mathscr S_k$-graph automatic and $(H,Y,\Gamma)$ is $\mathscr S_l$-graph automatic, then $G\ast H$ is $\mathscr S_{\max\{k,l\}}$-graph automatic.
\end{lem}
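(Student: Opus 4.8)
The idea is to build a normal form for $G \ast H$ out of the normal forms for $G$ and $H$ by recording the syllable structure of an element of the free product. Recall that every non-trivial element of $G \ast H$ has a unique expression as an alternating product $g_1 h_1 g_2 h_2 \cdots$ of non-trivial syllables from $G$ and $H$. First I would fix $\mathscr S_k$-graph automatic structures $(G,X,\Lambda)$ with normal form $L_G$ and $(H,Y,\Gamma)$ with normal form $L_H$, taking $\Lambda$ and $\Gamma$ disjoint, and introducing a fresh separator symbol $\sharp \notin \Lambda \cup \Gamma$. A natural choice of normal form for $G \ast H$ over the alphabet $\Lambda \cup \Gamma \cup \{\sharp\}$ is: $\sharp$ alone represents the identity, and a non-trivial element with syllable decomposition $s_1 s_2 \cdots s_m$ is represented by the string $w_1 \sharp w_2 \sharp \cdots \sharp w_m$, where $w_i \in L_G \setminus \{\text{id-word}\}$ if $s_i \in G$ and $w_i \in L_H \setminus \{\text{id-word}\}$ if $s_i \in H$, and consecutive $w_i$ come from alternating factors. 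This language $L$ is in $\mathscr S_{\max\{k,l\}}$: membership can be checked by a machine that runs the (non-blind, quasi-realtime) counter automaton for $L_G$ or $L_H$ on each block, resetting all counters to zero at each $\sharp$ and verifying alternation with finitely many control states; since blocks are processed one at a time, only $\max\{k,l\}$ counters are ever needed simultaneously.

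The core of the argument is to show that each multiplier language $L_z$ for $z$ a generator of $G \ast H$ lies in $\mathscr S_{\max\{k,l\}}$. By the change-of-generators lemma it suffices to handle the generating set $Z = X \cup Y$; by the lemma that $L_z \in \mathcal C \iff L_{z^{-1}} \in \mathcal C$ (applicable since $\mathscr S_{\max\{k,l\}}$ is closed under $\e$-free homomorphism) we may treat each generator up to inversion. Fix $x \in X$, so right multiplication by $x$ affects only the first $G$-syllable of the element. Given an accepted convolution $\otimes(u,v)$, write $u = u_1 \sharp u_2 \sharp \cdots$ and $v = v_1 \sharp v_2 \sharp \cdots$. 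The automaton for $L_x$ must: (i) verify $u, v \in L$; (ii) check that $u$ and $v$ agree syllable-for-syllable \emph{except possibly in the first syllable} and in whether a leading $G$-syllable is present at all; (iii) in the first syllable, run the $\mathscr S_k$ multiplier automaton $M_x$ for $G$ to verify $\overline{v_1} =_G \overline{u_1} x$ (with the appropriate conventions when $u_1$ or $v_1$ is the identity-word, i.e.\ the case where multiplying by $x$ creates or cancels a syllable, and when a trailing $H$-syllable of $u$ merges after cancellation --- this is where a bounded amount of extra casework and finitely many control states are required). Step (iii) uses $k$ counters; step (i) can be performed block-by-block reusing the same $\max\{k,l\}$ counters, provided we interleave correctly with step (iii) --- here the key point is that once the first syllable has been processed, the remaining blocks of $u$ and $v$ need only be checked for equality (a regular, counter-free task) against each other, so no counters compete with the $M_x$-simulation. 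Thus $L_x$ is built as an intersection of: a regular language enforcing the alternation-and-agreement skeleton; the inverse-homomorphic preimage of $L_x^{G}$ extracting the first $G$-syllable convolution; and the languages enforcing $u,v \in L$. Since $\mathscr S_{\max\{k,l\}}$ contains the regular languages and, by Lemma~\ref{lem:counterintersect} together with Lemma~\ref{lem:langsclosed}, is closed under intersection with regular languages and under inverse homomorphism, $L_x \in \mathscr S_{\max\{k,l\}}$. The argument for $y \in Y$ is symmetric, using $M_y$ with $l$ counters. For $z$ the identity (if $Z$ happens to contain trivial generators), $L_z = \{\otimes(u,u) \mid u \in L\}$ is regular.

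\textbf{Main obstacle.} The genuine difficulty is not the counting bookkeeping but the \emph{case analysis at the seam between syllables}, specifically the cancellation phenomena. When $\overline{u_1} x =_G 1$, the first syllable of $u$ disappears and the previous first $H$-syllable of $u$ becomes adjacent to the new first $G$-syllable (which was $u_2 \in L_G$ if $u$ began with an $H$-syllable) --- so $v$ can have strictly fewer, or a differently-parity-shifted, block structure than $u$, and we must not demand naive syllable-for-syllable agreement. Worse, after such a cancellation two $G$-syllables (the old $u_3$-analogue and what remains) could in principle need to be \emph{merged}, i.e.\ $v$'s first $G$-syllable equals a product $\overline{u_3} \cdot (\text{leftover})$ in $G$ --- but in a free product no further cancellation propagates, so at most \emph{one} such merge occurs, and it can again be verified by a single invocation of $G$'s multiplier machinery (using that $\mathscr S_k$ is closed under the relevant operations, or by a direct product-automaton argument). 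I would organize this into a small finite number of cases (length of $u_1$'s group element before multiplication; whether $u$ starts with a $G$- or $H$-syllable; whether cancellation occurs), handle each with an explicit intersection of a regular "skeleton" language and appropriate preimages of the $G$- or $H$-multiplier languages, and take the union over cases --- noting that $\mathscr S_{\max\{k,l\}}$ is closed under \emph{union} as well (nondeterministically guess the case at the outset). The remark in the excerpt that this argument "can be modified to apply to poly-context-free and context-sensitive languages" reflects exactly that the only properties used are: containment of the regular languages, closure under intersection with regular languages, union, inverse homomorphism, $\e$-free homomorphism, and --- for the single syllable-merge step --- closure under the operation realizing the $G$-multiplier composition, all of which the excerpt's Lemma~\ref{lem:langsclosed} (together with Lemma~\ref{lem:counterintersect} in the counter case) supplies.
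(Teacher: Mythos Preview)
Your proposal has a genuine error: right multiplication by $x\in X\subset G$ affects the \emph{last} syllable of an element of $G\ast H$, not the first. If $\overline{u}=s_1\cdots s_m$ in reduced form then $\overline{u}x$ equals $s_1\cdots s_{m-1}(s_mx)$ when $s_m\in G$, and $s_1\cdots s_m x$ (a new final syllable) when $s_m\in H$; your condition $\overline{v_1}=_G\overline{u_1}x$ is therefore the wrong one. This is not a mere slip, because your architecture depends on it. Your claim that after handling the differing syllable ``the remaining blocks of $u$ and $v$ need only be checked for equality (a regular, counter-free task)'' requires those blocks to be \emph{aligned} in the convolution $\otimes(u,v)$. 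That alignment holds precisely when the differing syllable is the last one: the identical prefixes $w_1\sharp\cdots\sharp w_{m-1}\sharp$ then line up letter for letter, and only the tail needs the factor's multiplier automaton. If instead the first syllables differed and had different lengths, every subsequent block would sit at an unbounded relative offset between the two rows of the convolution, and verifying equality of shifted copies is neither a regular nor a quasi-realtime counter condition.

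With ``first'' corrected to ``last'' your strategy coincides with the paper's: read the diagonal letters ${\alpha\choose\alpha}$ through the normal-form automaton on the common prefix (checking each block against $L_G$ or $L_H$ and using the non-blind test $\e_{=,\dots,=}$ to confirm the counters are zero at each separator --- this is exactly why the lemma is stated for $\mathscr S$ rather than $\mathscr C$), then branch at the final separator either to a fixed path spelling out the normal form of $x$ (when $s_m\in H$ or $u$ is trivial) or to a copy of the $G$-multiplier automaton $L_{G,x}$ (when $s_m\in G$). Your worry about syllable \emph{merging} is also misplaced: in a free product, multiplying by a single generator can create or delete one terminal syllable but never forces two syllables from the same factor to coalesce, so the only extra branch beyond the two just listed is the deletion case $s_mx=_G1$, and it involves no merge.
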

\begin{proof}
Assume that $\Lambda$ and $\Gamma$ are distinct sets of symbols, and let $L_G\subset \Lambda^*,L_H\subset\Gamma^*$ be the normal form languages for $G,H$ respectively, and
$\lambda_0\in L_G$ and $\gamma_0\in L_H$ the normal form words for the identity in each language.

Define $L_1=L_G\setminus \{\lambda_0\}$; this is a $k$-counter language as it is the intersection of $L_G$ with the regular language $\Lambda^*\setminus \{\lambda_0\}$, and similarly $L_2=L_H\setminus \{\gamma_0\}$ is an $l$-counter language.
If $L_1$ contains the empty string, choose $u\in \Lambda^*\setminus L_G$ and replace $L_1$ by its image under the homomorphism from $L_1$ to $\Lambda^*$ which sends $\e$ to $u$ and  is the identity on all other strings. Then $L_1$ remains a $k$-counter language. Similarly if $L_2$ contains the empty string, it can be replaced.
Define
\[L=\left\{
\begin{array}{l|l}
\begin{array}{l}
\e, \\
\#u_1\#v_1\#\dots \#u_s\#v_s, \\
\#u_1\#v_1\#\dots \#v_{s-1}\#u_s, \\
\#v_1\#u_2\#\dots \#u_s\#v_s,  \\
\#v_1\#u_2\#\dots  \#v_{s-1}\#u_s\end{array} &  s>0, u_i\in L_1, v_i\in L_2\end{array} \right\}\]
over the alphabet $\{\#\}\cup \Lambda\cup \Gamma$.
There is an obvious bijection from $L$ to the free product, namely the map that deletes all $\#$,  sends $u_i$ to $\overline{u_i}$ and $v_i$ to $\overline{v_i}$. 

Let $M_1$ be the $k$-counter automaton accepting $L_1$, with start state $\tau_1$; analogously  let $M_2$ be the $l$-counter automaton  with start state $\tau_2$ accepting $L_2$.   Assume the sets of states of $M_1$ and $M_2$ are distinct. Define a nondeterministic, non-blind $\max\{k,l\}$-counter automaton $M$  as follows.
The states of $M$ are the states of $M_1$ and $M_2$ together with three new states $\kappa_0,\kappa_1,\kappa_2$. The start state for $M$ is $\kappa_0$, and accepting states are $\kappa_1$ and $\kappa_2$.
The edges in $M$ are as follows:
\begin{enumerate}
\item Every edge in $M_1$ is again an edge in $M$, where the first $k$ counters correspond to the $k$ counters in $M_1$.
\item Every edge in $M_2$ is again an edge in $M$, where the first $l$ counters correspond to the $l$ counters in $M_2$.
\item For each accept state $\tau_a$ in $M_1$, put an edge from $\tau_a$ to $\kappa_2$ labeled $\e_{=,\dots, =}$. Note that this transition is allowed only when all counters are zero.
\item For each accept state $\tau_a'$ in $M_2$,  put an edge from $\tau_a'$ to $\kappa_1$ labeled $\e_{=,\dots, =}$. Again, this edge is followed only when all counters are zero.
\item Put an edge labeled $\e$ from
$\kappa_0$ to $\kappa_1$, and an edge labeled $\e$ from
$\kappa_0$ to $\kappa_2$.
\item Put an edge labeled $\#$ from
$\kappa_1$ to $\tau_1$, and an edge labeled $\#$ from
$\kappa_2$ to $\tau_2$.
\end{enumerate}
See Figure~\ref{fig:Mfreeprod}.
Then $M$ is a non-blind non-deterministic $\max\{k,l\}$-counter automaton which accepts the language $L$.

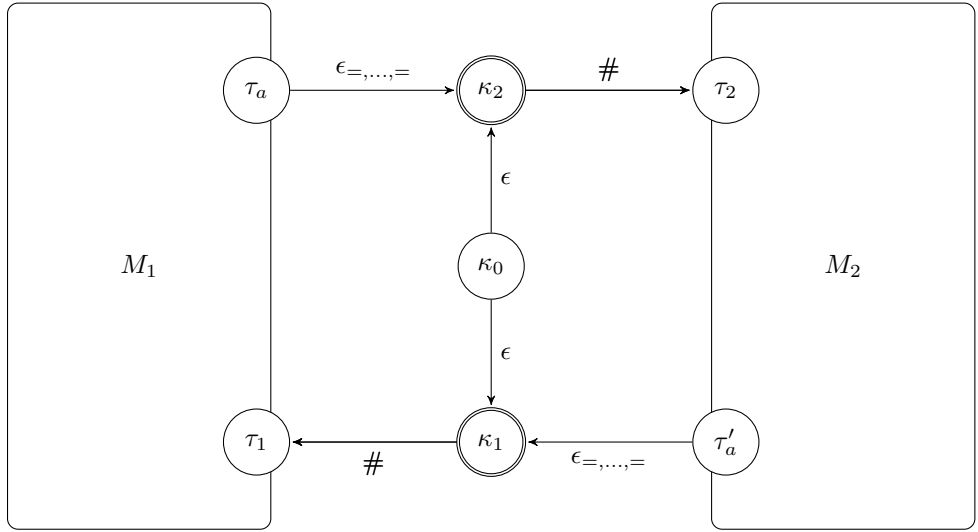
\begin{figure}[h]
\begin{center}
\begin{tikzpicture}[scale=.6, ->,>=stealth',shorten >=1pt,auto,node distance=2.5cm,  scale=1.3]
\tikzstyle{every state}=[fill=white,draw=black,text=black]

\tikzset{every loop/.style={min distance=10mm,in=60,out=120,looseness=10}}

  \node  [state, rectangle, rounded corners, minimum width=3.5cm, minimum height=7cm] (p) at (-3,3) {$M_1$};   
    \node  [state, rectangle, rounded corners, minimum width=3.5cm, minimum height=7cm] (q) at (9,3) {$M_2$};

  \node  [state] (a) at (3,3) {$\kappa_0$};
   \node  [state, accepting] (b) at (3,0) {$\kappa_1$};
   \node  [state, accepting] (c) at (3,6) {$\kappa_2$};

  \node  [state] (d) at (-1,0) {$\tau_1$};
  \node  [state] (e) at (-1,6) {$\tau_a$};
  \node  [state] (f) at (7,0) {$\tau_a'$};
  \node  [state] (g) at (7,6) {$\tau_2$};

   \path (a) edge [right] node {$\e$} (b);
   \path (a) edge [right] node {$\e$} (c);

   \path (b) edge [below] node {$\#$} (d);
   \path (c) edge [above] node {$\#$} (g);

   \path (b) edge [below] node {$\#$} (d);
   \path (c) edge [above] node {$\#$} (g);
      
   \path (e) edge [above] node {$\e_{=,\dots,=}$} (c);
   \path (f) edge [below] node {$\e_{=,\dots,=}$} (b);

      \end{tikzpicture}
 \caption{Construction of the automaton $M$ in the proof of Lemma~\ref{lem:freeproduct}. Start state is $\kappa_0$ and accept states are $\kappa_1$ and $\kappa_2$.}

   \label{fig:Mfreeprod}
\end{center}\end{figure}

Let $x\in X$, and let $L_{G,x}$ be the multiplier language for the counter-graph automatic structure on $G$.  Analogously, for $y \in Y$ let $L_{H,y}$ be the multiplier language for the counter-graph automatic structure on $H$.

We will describe the multiplier language in the case of multiplication by $x\in X$ and leave the analogous case of $y\in Y$ to the reader.

The multiplier language $L_x=\{\otimes(p,q) \mid p,q\in L, \overline{q}=_{G\ast H} \overline{p}x\}\subseteq \otimes(L,L)$ for $G\ast H$ is accepted by a modified version of $M$ which we denote $M_x$, constructed as follows.
\begin{enumerate}
\item
Let $M_x$ initially have the same states  and transitions as $M$, with none labeled as accept states. Replace each edge label $\alpha \neq \e$ 
by $\left( \begin{array}{c} \alpha \\ \alpha \end{array} \right)$.
\item
Let $\lambda_1 \lambda_2 \cdots \lambda_s\in L_H$ be the normal form word for $x$.  Add a new  state $\chi_1$ and a path from $\kappa_1$ to $\chi_1$
labeled ${\diamond \choose \#}{\diamond \choose \lambda_1}\dots {\diamond \choose \lambda_s}$. Declare $\chi_1$ to be an accept state.
This ensures that if $p$ is empty, or $p$ ends with a subword from $H$, that $\otimes(p,q)$ is accepted, where $\overline{q}=_G \overline{p}x$.   
\item
From $\kappa_1$ add an edge to a copy of the machine $L_{G,x}$ labeled $\e$.  Declare all previous accept states of this machine to be accept states of $M_x$.  If $p$ ends with a subword from $G$, say $p=\beta\gamma$ where $\gamma$ is the maximal suffix from $G$, then $\beta$ corresponds to a path through $M$ with an epsilon edge leading to $\tau_1$.  At that point, $L_{G,x}$ checks that the two suffix strings differ by $x$ in $G$.
\end{enumerate}
\end{proof}

\section{Examples}\label{sec:egs}

\subsection{Infinitely generated groups}
 \label{subsec:Finf}

The purpose of this example is to show that non-finitely generated 
 groups are captured by the class of ${\mathcal C}$-graph automatic groups for appropriate ${\mathcal C}$. 

\begin{prop} \label{prop:Finfty}
The free group
  $F_{\infty}=\langle  x_1, x_2, x_3, \dots \mid - \rangle$ on the countable set of generators $Y=\{x_i  \mid  i\in \mathbb Z_+\}$ is deterministic non-blind 2-counter-graph automatic. 
  \end{prop}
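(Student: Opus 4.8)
The idea is to encode elements of $F_\infty$ as reduced words in the $x_i^{\pm 1}$, but with each generator index $i$ written in \emph{unary}, separated by a marker symbol $\#$, so that the whole normal form lives over a finite alphabet $\Lambda = \{\#, a, a^{-1}\}$ (here $a$ stands for "increment the index" and we also need a sign bit, say two markers $\#_+, \#_-$ to record whether the syllable is $x_i$ or $x_i^{-1}$). A reduced word $x_{i_1}^{\epsilon_1}\cdots x_{i_m}^{\epsilon_m}$ with no cancellation maps to the string $\#_{\epsilon_1} a^{i_1} \#_{\epsilon_2} a^{i_2} \cdots \#_{\epsilon_m} a^{i_m}$. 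The language $L$ of such strings — requiring that consecutive syllables are not mutually inverse, i.e. we never have $\#_+ a^i \#_- a^i$ or $\#_- a^i \#_+ a^i$ adjacent — is actually \emph{regular}? No: checking $a^i$ vs $a^i$ for equality between adjacent blocks of unbounded length is exactly the kind of thing a finite-state machine cannot do, but a single (non-blind) counter can: read the first block incrementing, read the second decrementing while remembering the two signs in the state, and flag a violation iff the counter hits zero exactly when the block ends \emph{and} the signs were opposite. So $L$ is a deterministic non-blind $1$-counter language (and in particular in $\mathscr S_2$). I would first carefully define $\Lambda$, define the bijection $L \to F_\infty$, and prove $L \in \mathscr S_2$ via this single-counter cancellation-check.

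\textbf{The multiplier languages.} The generating set is $Y$ itself (infinite, which is allowed by the paper's conventions), so I must exhibit, for each fixed $y = x_n^{\pm 1}$, an automaton $M_y$ accepting $L_y = \{\otimes(u,v) : u,v \in L,\ \bar v =_{F_\infty} \bar u\, y\}$ in $\mathscr S_2$. Fix $y = x_n$ (the case $x_n^{-1}$ is symmetric, or follows from the $\e$-free-homomorphism lemma already in the paper). Right-multiplying a reduced word $w$ by $x_n$ either appends the syllable $x_n$ (if $w$ does not end in $x_n^{-1}$) or deletes the final syllable $x_n^{-1}$. Since $n$ is a fixed constant, the machine can use its \emph{finite control} to count up to $n$ when it needs to recognize or produce the block $a^n$ — no counter is needed for that part. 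The bulk of the work for $M_y$ is verifying $u$ and $v$ are in $L$ (the cancellation check, one counter, done simultaneously on both rows — but note $u$ and $v$ share almost all their syllables, so really one check suffices on the common part, with a second counter available to finish the check on whichever row is longer past the $\diamond$-padding) and verifying that $v$ is obtained from $u$ by the append-or-delete operation at the end. The two rows of $\otimes(u,v)$ agree letter-for-letter up to some point and then differ only in the last syllable, so $M_y$ reads both rows in lockstep checking agreement, and at the tail either confirms $v = u\,\#_+ a^n$ (when $u$'s last syllable is not $\#_- a^n$) or confirms $u = v\, \#_- a^n$ (when it is); the "is the last syllable $\#_- a^n$?" test is handled in the finite control since $n$ is bounded. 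Two counters suffice: one is the carried-over cancellation counter for $L$-membership, the other is free for the (bounded-length, hence control-only) tail comparison — so in fact two counters is comfortable, matching the claimed $\mathscr S_2$.

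\textbf{Main obstacle and cleanup.} I expect the genuinely delicate point to be the \emph{reduced-word / cancellation} bookkeeping done with a single non-blind counter across \emph{adjacent} variable-length blocks while the blocks stream past only once: the machine reads block $j$ incrementing, reads block $j{+}1$ decrementing, and at the $\#$ ending block $j{+}1$ it must know whether the counter is currently zero (equal indices) — but it then immediately needs a \emph{fresh} count of block $j{+}1$ to compare against block $j{+}2$, and it has already consumed block $j{+}1$. The fix is the standard one: compare block $j{+}1$ against block $j$ by decrementing from the count of block $j$, but \emph{simultaneously} keep a second counter incrementing on block $j{+}1$; after the check, swap roles. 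This is why two counters (not one) are natural and why the statement says $2$-counter. I would spell out this alternation invariant as the one lemma-like sub-argument, then note that everything else — the $\otimes$-padding with $\diamond$, the lockstep agreement check, the constant-size tail surgery for a fixed $n$, and determinism (every choice above is forced, and quasi-realtime holds since $\e$-moves only occur in bounded-length runs) — is routine. Finally I would observe that $L$ is geodesic (each normal form has $\Lambda$-length $m + \sum |i_j| \ge$ word length, and conversely is $\le$ a linear function of it once one fixes a finite sub-generating set, though since $Y$ is infinite the quasigeodesic notion must be read relative to $Y$ itself, where in fact $\|w\|_Y = m$ is the syllable count and $|u|_\Lambda$ can be arbitrarily larger — so this example, like $BS(m,n)$, has \emph{non-quasigeodesic} normal form, which is consistent with the remarks in Section~\ref{sec:defns} and worth flagging explicitly).
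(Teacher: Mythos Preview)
Your proposal is correct and follows the same overall strategy as the paper: encode each $x_i^{\pm 1}$ as a sign marker followed by $i$ in unary, check reducedness with two non-blind counters, and handle each multiplier $L_{x_n}$ by lockstep agreement on a common prefix plus a fixed-length tail modification (since $n$ is constant). Your observation that the normal form is not quasigeodesic relative to $Y$ is also correct and worth flagging.

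The one genuine difference is in how the two counters are deployed to recognise $L$. You use the standard \emph{counter-swapping} trick: while decrementing counter $A$ against block $j{+}1$ to compare with block $j$, simultaneously load block $j{+}1$ into counter $B$ for the next comparison, then swap roles. The paper instead uses an \emph{even/odd decomposition}: it writes $L = L_2 \cap L_3$, where $L_2$ checks only the non-overlapping pairs of syllables at positions $(1,2),(3,4),(5,6),\dots$ and $L_3$ checks the pairs $(2,3),(4,5),\dots$. Because the pairs within each $L_j$ do not overlap, a single counter suffices for each (using the non-blind reset $\downarrow$ between pairs), and the intersection then gives $2$ counters by Lemma~\ref{lem:counterintersect}. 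Both tricks are standard and yield the same counter count; the paper's version has the minor advantage that it exhibits $L$ explicitly as an intersection of two $1$-counter languages, which dovetails with the closure machinery developed earlier, while yours builds the $2$-counter machine directly. For the multiplier languages the paper similarly intersects a regular ``tail'' language with $\otimes(\Lambda^*,L)$ or $\otimes(L,\Lambda^*)$ rather than checking both rows, exploiting (as you implicitly do) that deleting the last syllable of a reduced word leaves a reduced word.
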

  
  \begin{proof}
  The idea is to represent generators and their inverses  as positive or negative unary integers.
Let $X=Y\cup Y^{-1}$, $\Lambda=\{p,n, 1\}$, and define a  homomorphism $\phi:X^*\rightarrow \Lambda^*$
by $\phi(x_i)=p 1^i$ and $\phi(x_i^{-1})=n 1^i$.
For example, $x_2^3x_5^{-1}$ is mapped to $p11p11p11n11111$.
The set of freely reduced finite strings of generators is  a normal form for $F_{\infty}$, so define a normal form  $L\subseteq \Lambda^*$ to be the image of this set under $\phi$. Note that the identity corresponds to the empty string $\e$.

Let $L_1\subseteq \Lambda^*$ be the set of strings of the form $r_11^{\eta_1}\dots r_k1^{\eta_k}$ where $r_i\in\{p,n\}$ and $\eta_i\in \mathbb Z^+$.
Let $L_{2}$ be the set of strings in  $L_1$  where  $r_{2i-1}\neq r_{2i}$ implies
$\eta_{2i-1} \neq  \eta_{2i}$, and $L_{3}$ the strings in $L_1$ where $r_{2i}\neq r_{2i+1}$ implies  $\eta_{2i}\neq \eta_{2i+1}$, for $i\geq 1$.
That is, in $L_{2}$ substrings $r_{2i-1}1^{\eta_{2i-1}}r_{2i}1^{\eta_{2i}}$ represent a freely reduced pair, and  in $L_{3}$ substrings $r_{2i}1^{\eta_{2i}}r_{2i+1}1^{\eta_{2i+1}}$ represent a freely reduced pair.   For example, $n1p11n11p1$ is in $L_{2}$ but not $L_{3}$.  The intersection $L_{2}\cap L_{3}$ is then the normal form language $L$. 

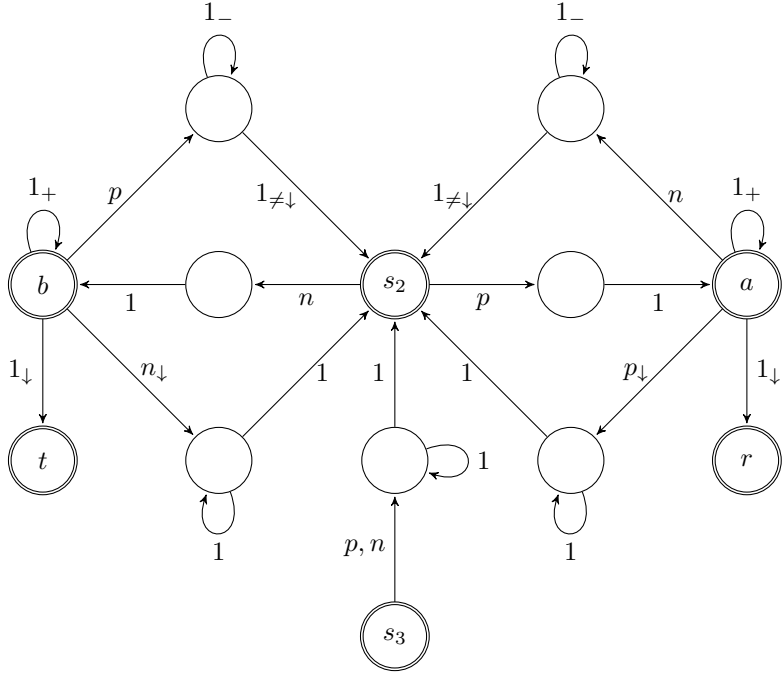
\begin{figure}[h]
\begin{center}
\begin{tikzpicture}[scale=.6, ->,>=stealth',shorten >=1pt,auto,node distance=2.5cm,  scale=1.3]
\tikzstyle{every state}=[fill=white,draw=black,text=black]


\tikzset{every loop/.style={min distance=10mm,in=70,out=110,looseness=5}}

  \node  [state, accepting] (s) at (6,3) {$s_2$};
   \node  [state, accepting] (a) at (12,3) {$a$};
   \node  [state, accepting] (b) at (0,3) {$b$};

  \node  [state] (c) at (9,6) {};
  \node  [state] (d) at (9,3) {};
  \node  [state] (e) at (9,0) {};
  \node  [state,accepting] (C) at (12,0) {$r$};
  \node  [state,accepting] (D) at (0,0) {$t$};

  \node  [state] (x) at (3,6) {};
  \node  [state] (y) at (3,3) {};
  \node  [state] (z) at (3,0) {};

   \path (s) edge [below] node {$p$} (d);
   \path (d) edge [below] node {$1$} (a);
   \path (a) edge [right] node {$n$} (c);
   \path (a) edge [left] node {$p_{\downarrow}$} (e);
   \path (c) edge [left] node {$1_{\neq\downarrow}$} (s);
   \path (e) edge [left] node {$1$} (s);

   \path (a) edge [right] node {$1_{\downarrow}$} (C);
   \path (b) edge [left] node {$1_{\downarrow}$} (D);

   \path (s) edge [below] node {$n$} (y);
   \path (y) edge [below] node {$1$} (b);
   \path (b) edge [left] node {$p$} (x);
   \path (b) edge [right] node {$n_{\downarrow}$} (z);
   \path (x) edge [left] node {$1_{\neq\downarrow}$} (s);
   \path (z) edge [right] node {$1$} (s);


      \path (c) edge [loop, above] node {$1_{-}$} (c);
   \path (x) edge [loop, above] node {$1_{-}$} (x);

 \path (b) edge [loop, above] node {$1_{+}$} (b);
 \path (a) edge [loop, above] node {$1_{+}$} (a);

\tikzset{every loop/.style={min distance=10mm,in=250,out=290,looseness=5}}

      \path (e) edge [loop, below] node {$1$} (e);
      \path (z) edge [loop, below] node {$1$} (z);

\tikzset{every loop/.style={min distance=10mm,in=160,out=200,looseness=5}}

\tikzset{every loop/.style={min distance=10mm,in=340,out=20,looseness=5}}

   \node  [state] (S) at (6,0) {};
   \node  [state, accepting] (SS) at (6,-3) {$s_3$};

   \path (S) edge [left] node {$1$} (s);
   \path (SS) edge [left] node {$p,n$} (S);

    \path (S) edge [loop, right] node {$1$} (S);

      \end{tikzpicture}
 \caption{Deterministic non-blind 1-counter automaton accepting the language $L_{3}$ in the proof of Proposition~\ref{prop:Finfty}.  The start state is $s_3$. Accept states are $s_2,s_3,a,b,r,t$.
 The automaton for $L_2$ is identical with  start state  $s_2$. }

   \label{fig:Finfty}
\end{center}\end{figure}

A deterministic non-blind 1-counter automaton accepting  $L_3$
 is shown in Figure~\ref{fig:Finfty}. The automaton accepting $L_{2}$ is obtained from this by setting $s_2$ to be the start state.
Recall that the notation $1_{\neq\downarrow}$ means if the counter is nonzero, read 1 and set the counter to 0.

Then  $L=L_{2}\cap L_{3}$ is deterministic non-blind 2-counter by Lemma~\ref{lem:counterintersect}.


The multiplier language $L_{x_i}$ for the generator $x_i$
 is the set of strings in $\otimes (L,L)$ of the form
\[{r_1 \choose r_1}   {1\choose 1}^{\eta_1} {r_2 \choose r_2}   {1\choose 1}^{\eta_2}  \dots  {r_{k}\choose r_{k}}  {1\choose 1}^{\eta_{k}} {\diamond \choose p}{\diamond\choose 1}^i\]
if $r_k=p$ or  $\eta_k\neq i$,
and otherwise if $r_k=n$ and $\eta_k=i$
\[{r_1 \choose r_1}   {1\choose 1}^{\eta_1} {r_2 \choose r_2}   {1\choose 1}^{\eta_2}  \dots  {r_{k-1}\choose r_{k-1}}  {1\choose 1}^{\eta_{k-1}} {n\choose \diamond}{1\choose \diamond}^i.  \]

Define $L_{x_i}^+$ to be the  regular language is given by the regular expression
\[\left\{{1\choose 1}, {p\choose p}, {n\choose n}\right\}^*\left\{{\diamond \choose p}{\diamond \choose 1}^i\right\},\] and $L_{x_i}^-$ the language given by the regular expression \[\left\{{1\choose 1}, {p\choose p}, {n\choose n}\right\}^*\left\{{n \choose \diamond}{1\choose \diamond }^i\right\}.\]

Next consider the language  $ \otimes(\Lambda^*, L)$. Modify the automaton in Figure \ref{fig:Finfty} by replacing edges labeled $x_{\#}$ (where $x\in\{p,n,1\}$ and $\#$ denotes some counter instructions) by  four edges labeled ${p\choose x}_{\#}, {n\choose x}_{\#}, {1\choose x}_{\#}, {\diamond \choose x}_{\#}$.
The intersection of the two languages of strings accepted by this automaton with start state either $s_2$ or $s_3$ is
the language  $ \otimes(\Lambda^*, L)$, and is  deterministic non-blind 2-counter.

A similar argument shows that $ \otimes(L,\Lambda^*)$ is  deterministic non-blind 2-counter.  Then $L_{x_i}$ is the union of $L_{x_i}^+\cap \otimes(\Lambda^*, L)$  and  $L_{x_i}^-\cap  \otimes(L,\Lambda^*)$, and so is deterministic non-blind 2-counter.
\end{proof}

\subsection{Baumslag-Solitar groups}

In \cite{\KKM} the solvable Baumslag-Solitar groups are shown to be graph automatic. Here we show that the non-solvable Baumslag-Solitar groups are blind deterministic 3-counter-graph automatic.

\begin{prop}
Let $2\leq m<n$. Then $BS(m,n)=\langle a,t  \mid  ta^mt^{-1}=a^n\rangle$ is  blind deterministic 3-counter-graph automatic.
\end{prop}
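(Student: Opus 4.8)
The plan is to use the HNN decomposition of $BS(m,n)$ over $\langle a\rangle$, with associated subgroups $\langle a^m\rangle$ and $\langle a^n\rangle$, together with the classical Britton normal form but with every $a$-power written in unary. Work over $\Lambda=\{a,\bar a,t,\bar t\}$ and let $L\subseteq\Lambda^*$ be the set of words
\[
a^{c_0}\,t^{\epsilon_1}\,a^{c_1}\,t^{\epsilon_2}\cdots t^{\epsilon_k}\,a^{c_k},\qquad \epsilon_i\in\{1,-1\},\ c_i\in\mathbb Z
\]
(where $a^c$ abbreviates the block $a^{c}$ if $c\ge 0$ and $\bar a^{-c}$ if $c<0$) that are Britton-reduced --- no subword $t\,a^{c_i}\,\bar t$ with $m\mid c_i$ and no subword $\bar t\,a^{c_i}\,t$ with $n\mid c_i$ --- and coset-normalised, so that for $0\le i<k$ the exponent $c_i$ is the least non-negative residue of its coset, namely $0\le c_i<n$ if $\epsilon_{i+1}=1$ and $0\le c_i<m$ if $\epsilon_{i+1}=-1$, while $c_k$ is unrestricted (and so is $c_0$ when $k=0$). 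By the standard normal form theorem for HNN extensions \cite{\MKS} this is a set of unique representatives in bijection with $BS(m,n)$, with the trivial element represented by the empty word. Since $m$ and $n$ are fixed, every defining condition is local and length-bounded, so $L$ is regular, hence accepted by a blind deterministic $3$-counter automaton that never uses its counters.

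For each generator $x$ I would exhibit a blind deterministic $3$-counter automaton recognising $L_x=\{\otimes(u,v)\mid u,v\in L,\ \overline{v}=_G\overline{u} x\}$. Transposing the two rows of a convolution is a bijection of the convolution alphabet and so preserves recognisability by blind deterministic counter automata; it therefore suffices to treat $L_a$ and $L_{\bar t}$. Right multiplication by $a$ merely shifts the \emph{unrestricted} last exponent $c_k$ by $1$, changes nothing else, and creates no pinch (nothing lies to its right), so $v$ is just $u$ with its trailing $a$-block incremented (append an $a$, delete a trailing $\bar a$, or a trivial case when $c_k$ passes through $0$); thus $L_a$ is regular. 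The real work is $L_{\bar t}$.

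The key structural point is that appending $\bar t$ to a normal form triggers at most one Britton reduction and never cascades, precisely because the reduction occurs at the right end, where the exponent is free. Writing $u=\cdots t^{\epsilon_k}a^{c_k}$ and $c_k=qm+r$ with $0\le r<m$: if $\epsilon_k=-1$, or $k=0$, or $\epsilon_k=1$ with $m\nmid c_k$, then the normal form of $\overline{u}\,\bar t$ is
\[
\cdots t^{\epsilon_k}\,a^{r}\,\bar t\,a^{qn}
\]
(using $a^m\bar t=\bar t\,a^n$, hence $a^{qm}\bar t=\bar t\,a^{qn}$); whereas if $\epsilon_k=1$ and $m\mid c_k$ then $r=0$, the factor $t\,a^{0}\,\bar t$ cancels, and the preceding exponent absorbs $a^{qn}$ to give last block $a^{c_{k-1}+(n/m)c_k}$. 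In all cases $v$ agrees with $u$ outside a bounded suffix, the new residue block has length $<m$, and whether $m\mid c_k$ is decided by a finite-state scan. The one unbounded condition is a single signed linear identity among unary $a$-blocks: $m\cdot(\text{last exponent of }v)=n\cdot(c_k-r)$ in the first case, and $m\cdot\bigl((\text{last exponent of }v)-c_{k-1}\bigr)=n\cdot c_k$ in the second. A single blind counter verifies such an identity --- add $+m$ (resp.\ $-m$) on each $a$ (resp.\ $\bar a$) of $v$'s last block and $-n$ (resp.\ $+n$) on each $a$ (resp.\ $\bar a$) of $u$'s last block, together with a bounded correction determined by the already-recorded residue $r$ --- ending at zero exactly when the identity holds. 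Running this counter in parallel, via Lemma~\ref{lem:counterintersect}, with the finite-state check that $u,v\in L$ and that the bounded data match gives a blind deterministic automaton using at most three counters; hence $L_{\bar t}$, and by transposition $L_t$, lie in the class. With $L_a$ and $L_{\bar a}$ regular, we conclude that $\bigl(BS(m,n),\{a^{\pm1},t^{\pm1}\},\Lambda\bigr)$ is blind deterministic $3$-counter-graph automatic.

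I expect the main obstacle to be the re-normalisation bookkeeping behind $L_{\bar t}$ and $L_t$: verifying that exactly one reduction ever occurs (so that $u$ and $v$ genuinely differ only on a bounded suffix), getting the sign conventions for negative exponents right in the unary encoding, and checking that the offset linear relations among blocks are faithfully recognised by a \emph{deterministic} blind counter reading the convolution left to right. The number of counters is a secondary matter: a careful accounting needs only one non-trivial counter per multiplier, comfortably inside the claimed bound of three.
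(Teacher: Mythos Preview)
Your approach is correct and genuinely different from the paper's. The paper encodes each element as $P a^N$ with $P$ a reduced word over a finite syllable alphabet and $N$ stored \emph{twice} in unary, once as $r+pm$ and once as $s+qn$; a single blind counter checks the consistency $r+pm=s+qn$, so $L$ is $1$-counter, $\otimes(L,L)$ is $2$-counter, and the multiplier-specific check for $L_t$ adds one more counter to reach the stated three. You instead take the coset-normalised Britton normal form with a single unary tail: all defining conditions are local, so $L$ and $L_a$ are outright regular, and the only counter is the one verifying the linear relation $m\cdot(\text{tail of }v)=n\cdot(c_k-r)$ (respectively $m\cdot(\text{tail of }v - c_{k-1})=n\cdot c_k$) in $L_{\bar t}$. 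Since the first-mismatch letter in the convolution already distinguishes the no-cancellation case from the single-cancellation case, and since the stray block $c_{k-1}$ is bounded by $n$ and so can be absorbed into finite state, a \emph{single} blind deterministic counter suffices; your ``at most three'' is a safe overestimate. Two small points of presentation: your phrase ``agrees with $u$ outside a bounded suffix'' is misleading (the differing suffixes are unbounded in length; what is bounded is the residue data $r$ or $c_{k-1}$), and the sign/first-mismatch case analysis you flag as the main obstacle does need to be written out --- but it goes through exactly as you anticipate.
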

\begin{proof}
Any word in $\{a^{\pm1}, b^{\pm1}\}^*$ can be transformed into a normal form  for
the corresponding group element by ``pushing'' each $a$ and $a^{-1}$ in the
word as far to the right as possible and freely reducing using the identities
\[\begin{array}{llllllll}
a^{\pm 1}a^{\mp 1}=1, & a^{\pm n} t = t a^{\pm m}, & a^{-i}t=a^{n-i}ta^{-m},\\
 t^{\pm 1}t^{\mp 1}=1, &
 a^{\pm m} t^{-1} = t^{-1} a^{\pm n},
&  a^{-j}t^{-1}= a^{m-j} t^{-1} a^{-n}.
\end{array}\]
where $0<i<n$ and $ 0<j<m$, so that only positive powers of $a$ appear before a
$t^{\pm 1}$ letter.
 The resulting word can be written as $P a^N$, where  $P$ is a freely reduced word in the alphabet
$\Pi=\{t, at, \dots a^{n-1}t, t^{-1}, at^{-1}, \dots a^{m-1}t^{-1}  \}$
(see for example  \cite{\LS}  p.181).  Let $\Gamma\subseteq \Pi^*$ be the set of freely reduced words in $ \Pi^*$. 

It is clear that the language of the words of the form $Pa^N$  with $P\in\Gamma, N\in\mathbb N$ is regular, and in bijection with the group. The idea for the counter-graph automatic structure is to represent the integer $N$ in two different ways, so that multiplication by the generator $t$ can be easily recognised.

For $N\in \mathbb Z$, if $N$ is positive write $N=pm+r=qn+s$ with $0\leq r<m$ and $0\leq s<n$; if $N$ is negative write $N=-(pm+r)=-(qn+s)$; and otherwise write $N=0$.
 Define $L$ to be the language 
$$L =  \left\{ \begin{array}{l|l} P\#1^r\#1^p\#1^s\#1^q, &
P\in\Gamma,\\
     P\#(-1)^r\#(-1)^p\#(-1)^s\#(-1)^q, & r\in[0,m), s\in[0,n),\\  P\#\#\#\# &r+pm=s+qn,\\ & r+pm>0\end{array}\right\}.$$
Then $L$ is in bijection with words of the form $Pa^N$ for $N$ positive, negative and zero, so is a normal form for $BS(m,n)$ over the alphabet
$\Lambda =\Pi\cup\left\{1, -1, \# \right\}$.

For example, in $BS(4,7)$: \begin{itemize}
\item the string $at\#111\#1\#\#1$ represents the word $ata^{7}$;
\item  the string $at\#11111\#1\#\#1$ is rejected since $r=5$ is not less than $m=4$;
\item  the string $at\#11\#11\#1\#1$ is rejected since $r+pm=10$ whereas $s+qn=8$.
\end{itemize}

Let $L_1$ be the language \[
L_1=\left\{\begin{array}{l|l}P\#1^r\#1^{p}\#1^s\#1^{q},  &  P\in \{a,t^{\pm 1}\}^*,\\
P\#(-1)^r\#(-1)^{p}\#(-1)^s\#(-1)^{q},  & r,p,s,q\in\N,\\
P\#\#\#\#   &    r+pm=s+qn>0\end{array}\right\}.\]
Then $L_1$ is accepted by the  blind deterministic 1-counter automaton shown in Figure~\ref{fig:automBS}.

Let $L_2$ be the regular language of strings
 \[
L_2=\left\{\begin{array}{l|l}P\#1^r\#1^{p}\#1^s\#1^{q}, & P\in \Gamma,  \\
P\#(-1)^r\#(-1)^{p}\#(-1)^s\#(-1)^{q},  & r,s,p,q\in\mathbb N, \\
P\#\#\#\#  & r<m, s<n \end{array}\right\}.\]
Then $L=L_1\cap L_2$ is a blind-1-counter language.

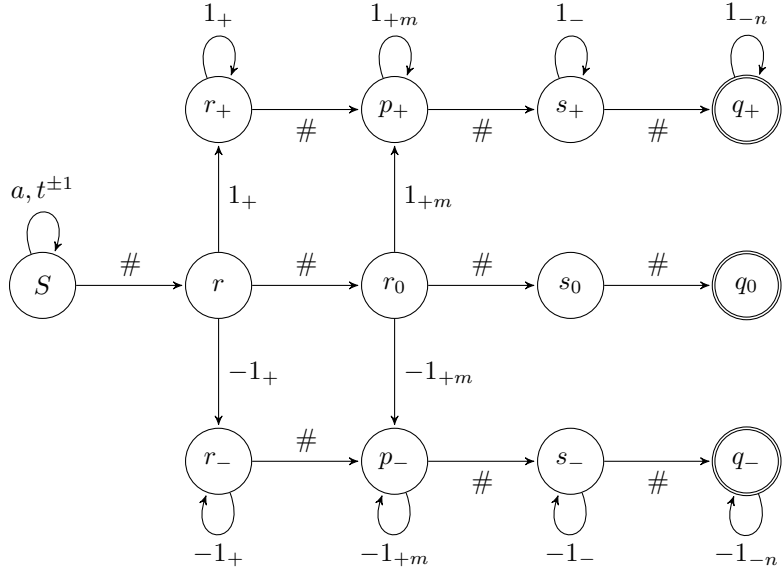
\begin{figure}[h]
\begin{center}
\begin{tikzpicture}[scale=.6, ->,>=stealth',shorten >=1pt,auto,node distance=2.5cm,  scale=1.3]
\tikzstyle{every state}=[fill=white,draw=black,text=black]

\tikzset{every loop/.style={min distance=10mm,in=70,out=110,looseness=5}}

   \node  [state] (SS) at (1,4) {$S$};

   \node  [state] (r) at (4,1) {$r_-$};
      \node  [state] (p) at (7,1) {$p_-$};
   \node  [state] (s) at (10,1) {$s_-$};
  \node  [state, accepting] (q) at (13,1) {$q_-$};

   \node  [state] (X) at (4,4) {$r$};

 \path (SS) edge [above] node {$\#$} (X);

   \path (X) edge [right] node {$-1_+$} (r);
      \path (r) edge [above] node {$\#$} (p);
         \path (p) edge [below] node {$\#$} (s);
            \path (s) edge [below] node {$\#$} (q);

   \node  [state] (R) at (4,7) {$r_+$};
      \node  [state] (P) at (7,7) {$p_+$};
   \node  [state] (S) at (10,7) {$s_+$};
  \node  [state, accepting] (Q) at (13,7) {$q_+$};

    \path (X) edge [right] node {$1_+$} (R);
      \path (R) edge [below] node {$\#$} (P);
         \path (P) edge [below] node {$\#$} (S);
            \path (S) edge [below] node {$\#$} (Q);

      \path (SS) edge [loop, above] node {$a, t^{\pm 1}$} (SS);

      \path (R) edge [loop, above] node {$1_+$} (R);
      \path (P) edge [loop, above] node {$1_{+m}$} (P);
      \path (S) edge [loop, above] node {$1_-$} (S);
      \path (Q) edge [loop, above] node {$1_{-n}$} (Q);

\tikzset{every loop/.style={min distance=10mm,in=250,out=290,looseness=5}}

      \path (r) edge [loop] node {$-1_+$} (r);
      \path (p) edge [loop] node {$-1_{+m}$} (p);
      \path (s) edge [loop] node {$-1_-$} (s);
      \path (q) edge [loop] node {$-1_{-n}$} (q);

   \node  [state] (Y) at (7,4) {$r_0$};
    \path (X) edge [above] node {$\#$} (Y);

   \node  [state] (YY) at (10,4) {$s_0$};   
    \path (Y) edge [above] node {$\#$} (YY);
   \node  [state, accepting] (YYY) at (13,4) {$q_0$};
    \path (YY) edge [above] node {$\#$} (YYY);

 \path (Y) edge [right] node {$1_{+m}$} (P);
 \path (Y) edge [right] node {$-1_{+m}$} (p);

      \end{tikzpicture}
 \caption{1-counter automaton accepting the language $L_1$ for $BS(m,n)$.  Accept states are $q_+,q_0$ and $q_-$.  
 The counter checks the equation $r+pm=s+qn$ is satisfied.}

   \label{fig:automBS}
\end{center}
\end{figure}


Now we turn to the multiplier languages $L_a$ and $L_t$.  

First observe that the languages $\otimes(L,\Lambda^*)$ and $\otimes(\Lambda^*,L)$ are blind-1-counter, and so $\otimes(L,L)=\otimes(L,\Lambda^*)\cap \otimes(\Lambda^*,L)$ is a blind-2-counter language  by Lemma~\ref{lem:counterintersect}.

We will describe $L_a$ as the union of a set of languages intersected with  $\otimes(L,L)$. Note that $L_a$ is the set of strings $\otimes(u,v)$ where $\overline u=Pa^N, \overline v=Pa^{N+1}$.
Recall that the regular expression $\{1\}^*\#\{1\}^*$ denotes  the set of  strings in $\{1,\#\}^*$ with exactly one $\#$ letter.
 The languages are as follows,  for $0\leq r\leq m-2$:
\begin{itemize}
\item
$ \mathcal L_r=\left\{\begin{array}{l|l}
\otimes\left(\begin{array}{l}
P\#1^r\#1^p\#Q,  \\
P\#1^{r+1}\#1^p\#R\end{array}\right)

& \begin{array}{l} P\in \{a,t^{\pm 1}\}^*, \\
 p\in \mathbb N, \\
 Q,R\in  \{1\}^*\#\{1\}^* \end{array} \end{array}\right\};$
 \medskip

\item
$ \mathcal L_{m-1}=\left\{\begin{array}{l|l}
\otimes\left(\begin{array}{l}P\#1^{m-1}\#1^p\#Q,  \\ P\#\#1^{p+1}\#R\end{array}\right)
& \begin{array}{l} P\in \{a,t^{\pm 1}\}^*, \\
 p\in \mathbb N,\\
 Q,R\in \{1\}^*\#\{1\}^*  \end{array} \end{array}\right\};$
  \medskip

\item
$ \mathcal K_{r+1}=\left\{\begin{array}{l|l}
\otimes\left(\begin{array}{l}P\#(-1)^{r+1}\#(-1)^p\#Q,  \\ P\#(-1)^r\#(-1)^p\#R\end{array}\right)
& \begin{array}{l} P\in \{a,t^{\pm 1}\}^*, \\
 p\in \mathbb N,\\
 Q,R\in \{-1\}^*\#\{-1\}^*  \end{array} \end{array}\right\};$

 \medskip

\item
$ \mathcal K_0=\left\{\begin{array}{l|l}
\otimes\left(\begin{array}{l}P\#\#(-1)^{p+1}\#Q,  \\  P\#(-1)^{m-1}\#R\end{array}\right)
& \begin{array}{l} P\in \{a,t^{\pm 1}\}^*, \\
 p\in \mathbb N,\\
 Q,R\in  \{-1\}^*\#\{-1\}^* \end{array} \end{array}\right\}.$

\end{itemize}


These languages are designed simply to check the condition that  $\overline u= Pa^N, \overline v=Pa^{N+1}$.
Each language is regular, so its intersection with $\otimes(L,L)$ is a
blind 2-counter language. It follows that $L_a$ is blind 2-counter.


\medskip

Now we come to the language $L_t$. 
We will again intersect  with the blind 2-counter language $\otimes(L,L)$.
We must accept strings $\otimes(u,v)$ for words $u,v\in L$ with $\overline u=Pa^N$ and $\overline v=Pa^Nt$. We consider the following cases, which depend on  whether
or not $P$ ends in $t^{-1}$,  and  whether or not $n$ divides $N$.

\medskip

\noindent\textbf{Case 1}  $P$ ends in $t$ or is empty:

 For $N\geq 0$, write  $N=qn+s$ with $0\leq s<n$.  Then $a^Nt=a^sta^{qm}$. This gives  strings of the form
  \[\otimes(P\#1^{\alpha}\#1^{\beta}\#1^s\#1^q,Pa^st\#\#1^q\#1^{\gamma}\#1^{\delta})\]
where $\alpha,\beta,\gamma, \delta$ are the appropriate integers.  Note that  there is no cancelation between $P$ and the letters added, since $P$ is either empty or ends in $t$.

For $N<0$, write $N=-(qn+s)$ with $0\leq s<n$. Then   $a^Nt=a^{-s}ta^{-qm}$. If $s=0$ then this gives the set of strings
 \[\otimes(P\#(-1)^{\alpha}\#(-1)^{\beta}\#\#(-1)^q,Pt\#\#(-1)^q\#(-1)^{\gamma}\#(-1)^{\delta}).\]
If $s>0$ then $a^Nt=a^{-s}ta^{-qm}=a^{n-s}ta^{-m-qm}$ which gives the
set of strings
\[\otimes(P\#(-1)^{\alpha}\#(-1)^{\beta}\#(-1)^s\#(-1)^q,Pa^{n-s}t\#\#(-1)^{q+1}\#(-1)^{\gamma}\#(-1)^{\delta}).\]
Again there is no cancelation between $P$ and the letters added, since $P$ is either empty or ends in $t$.

These strings can be obtained by intersecting the following languages with $\otimes(L,L)$:
\begin{itemize}
\item
$ \mathcal U_s=\left\{\begin{array}{l|l}
\otimes\left(\begin{array}{l}P\#Q\#1^s\#1^q, \\
                                                 Pa^st\#\#1^q\#R\end{array}\right)

& \begin{array}{l} P\in \{\e, wt \ : \ w\in \{a,t^{\pm 1}\}^*\}, \\
 q\in \mathbb N, \\
 Q,R\in  \{1\}^*\#\{1\}^* \end{array} \end{array}\right\}$\\
 for $0\leq s\leq n-1$,

 \medskip

\item  $ \mathcal V_s=\left\{\begin{array}{l|l}
\otimes\left(\begin{array}{l}P\#Q\#(-1)^s\#(-1)^q, \\
                                                 Pa^{n-s}t\#\#(-1)^{q+1}\#R\end{array}\right)

& \begin{array}{l} P\in \{\e, wt \ : \ w\in \{a,t^{\pm 1}\}^*\}, \\
 q\in \mathbb N, \\
 Q,R\in  \{1\}^*\#\{1\}^* \end{array} \end{array}\right\}$\\
 for $1\leq s\leq n-1$.

 \medskip

\item  $ \mathcal V_0=\left\{\begin{array}{l|l}
\otimes\left(\begin{array}{l}P\#Q\#\#(-1)^q, \\
                                                 Pt\#\#(-1)^{q}\#R\end{array}\right)

& \begin{array}{l} P\in \{\e, wt \ : \ w\in \{a,t^{\pm 1}\}^*\}, \\
 q\in \mathbb N, \\
 Q,R\in  \{1\}^*\#\{1\}^* \end{array} \end{array}\right\}$.
\end{itemize}

The languages $\mathcal U_s, V_s$ for $0\leq s\leq n-1$ are blind 1-counter --- the counter is used to  check the entries $(\pm 1)^q$ are the same in each component of the convoluted string.

\medskip

\noindent\textbf{Case 2}  $P$ ends in $t^{-1}$, and $n$ does not divide $N$.

In this case $N=qn+s$ with $0<|s|<n$.

 For $N\geq 0$ write $N=qn+s$ with $s>0$.
 Then $Pa^Nt=Pa^sta^{qm}$ where $Pa^st$ has no cancelation so is in normal form. This gives the set of strings
 \[\otimes(P\#1^{\alpha}\#1^{\beta}\#1^s\#1^q,Pa^st\#\#1^q\#1^{\gamma}\#1^{\delta}).\]

For $N<0$, write $N=-(qn+s)$ with $s>0$.
Then   \[a^Nt=a^{-s}ta^{-qm}=a^{n-s}ta^{-m-qm}\]
and so $Pa^Nt=Pa^{n-s}ta^{-m-qm}$ and $P$ does not cancel, so this gives
 the
set of strings
\[\otimes(P\#(-1)^{\alpha}\#(-1)^{\beta}\#(-1)^s\#(-1)^q,Pa^{n-s}t\#\#(-1)^{q+1}\#(-1)^{\gamma}\#(-1)^{\delta}).\]

These strings can be obtained by intersecting the following languages with $\otimes(L,L)$:
\begin{itemize}
\item
$ \mathcal W_s=\left\{\begin{array}{l|l}
\otimes\left(\begin{array}{l}P\#Q\#1^s\#1^q, \\
                                                 Pa^st\#\#1^q\#R\end{array}\right)

& \begin{array}{l} P\in \{\e, wt^{-1} \ : \ w\in \{a,t^{\pm 1}\}^*\}, \\
 q\in \mathbb N, \\
 Q,R\in  \{1\}^*\#\{1\}^* \end{array} \end{array}\right\}$\\
 for $1\leq s\leq n-1$,

 \medskip

\item  $ \mathcal X_s=\left\{\begin{array}{l|l}
\otimes\left(\begin{array}{l}P\#Q\#(-1)^s\#(-1)^q, \\
                                                 Pa^{n-s}t\#\#(-1)^{q+1}\#R\end{array}\right)

& \begin{array}{l} P\in \{\e, wt^{-1} \ : \ w\in \{a,t^{\pm 1}\}^*\}, \\
 q\in \mathbb N, \\
 Q,R\in  \{-1\}^*\#\{-1\}^* \end{array} \end{array}\right\}$\\
 for $1\leq s\leq n-1$.

\end{itemize}

Again the languages $\mathcal W_s, X_s$ for $1\leq s\leq n-1$ are blind 1-counter --- the counter is used to  check the entries $(\pm 1)^q$ are the same in each component of the convoluted string.

\medskip

\noindent\textbf{Case 3}  $P$ ends in $t^{-1}$, and $n$  divides $N$.

Put $P=Ta^ct^{-1}$, where $c\in[0,m)$ and $T$ is empty or ends in $t^{\pm 1}$.
Since we will intersect with $\otimes(L,L)$ we don't care whether $Ta^ct^{-1}$ is freely reduced or not.

 For $N\geq 0$ write $N=qn$ so  \[Pa^Nt=Pta^{qm}=Ta^ct^{-1}ta^{qm}=Ta^{c+qm}.\]
 This gives the set of strings
 \[\otimes(Ta^ct^{-1}\#1^{\alpha}\#1^{\beta}\#\#1^q,T\#1^c\#1^q\#1^{\gamma}\#1^{\delta}).\]

For  $N<0$ write $N=-(qn)$  and
 \[Pa^Nt=Pta^{-qm}=Ta^ct^{-1}ta^{-qm}=Ta^{c-qm}=Ta^{c-m}a^{-(q-1)m}\]
 This gives the set of strings
  \[\otimes(Ta^ct^{-1}\#(-1)^{\alpha}\#(-1)^{\beta}\#\#(-1)^q,T\#(-1)^{m-c}\#(-1)^{q-1}\#(-1)^{\gamma}\#(-1)^{\delta}).\]

These strings can be obtained by intersecting the following languages with $\otimes(L,L)$:
\begin{itemize}
\item
$ \mathcal Y_c=\left\{\begin{array}{l|l}
\otimes\left(\begin{array}{l}Ta^ct^{-1}\#Q\#\#1^q, \\
                                                 T\#1^c\#1^q\#R\end{array}\right)

& \begin{array}{l} T\in \{a,t^{\pm 1}\}^*, \\
 q\in \mathbb N, \\
 Q,R\in  \{1\}^*\#\{1\}^* \end{array} \end{array}\right\}$\\
 for $0\leq c\leq n-1$,

 \medskip

\item  $ \mathcal Z_c=\left\{\begin{array}{l|l}
\otimes\left(\begin{array}{l}Ta^ct^{\pm 1}\#Q\#\#(-1)^q, \\
                                                 T\#(-1)^c\#(-1)^q\#R\end{array}\right)

& \begin{array}{l} T\in \{a,t^{\pm 1}\}^*, \\
c\in[0,n),\\
 q\in \mathbb N, \\
 Q,R\in  \{-1\}^*\#\{-1\}^* \end{array} \end{array}\right\}$\\
 for $0\leq c\leq n-1$,

\end{itemize}

Once again the languages $\mathcal Y_c, \mathcal Z_c$ for $0\leq c\leq n-1$  are blind 1-counter --- the counter is used to  check the entries $(\pm 1)^q$ are the same in each component of the convoluted string.

It follows that the language $L_t$ is the union of the languages $\mathcal U_i,\mathcal V_i,\mathcal W_i,\mathcal X_i,\mathcal Y_i,\mathcal Z_i$ each intersected with $\otimes(L,L)$ and is therefore blind deterministic 3-counter.
\end{proof}

We remark that the above normal form language is not quasigeodesic. In \cite{BurilloE}  Burillo and the first author find a
 metric estimate for  $BS(m,n)$. It is shown that the geodesic length of the element equal to $a^N$ is $O(\log N)$, while the normal form representative given above has length $O(N/m+N/n)=O(N)$.


\bibliography{refs} \bibliographystyle{plain}

\end{document}